\renewcommand{\PrintDOI}[1]{\doi{#1}}
\def\@setdate{\datename\ \@date}
\def\@setaddresses{\par
  \nobreak \begingroup
\footnotesize
  \def\author##1{\nobreak\addvspace\bigskipamount}%
  \def\\{\unskip, \ignorespaces}%
  \interlinepenalty\@M
  \def\address##1##2{\begingroup
    \par\addvspace\bigskipamount\indent
    \@ifnotempty{##1}{(\ignorespaces##1\unskip) }%
    {\scshape\ignorespaces##2}\par\endgroup}%
  \def\curraddr##1##2{\begingroup
    \@ifnotempty{##2}{\nobreak\indent{\itshape Current address}%
      \@ifnotempty{##1}{, \ignorespaces##1\unskip}\/:\space
      ##2\par}\endgroup}%
  \def\email##1##2{\begingroup
    \@ifnotempty{##2}{\nobreak\indent{\itshape Email address}%
      \@ifnotempty{##1}{, \ignorespaces##1\unskip}\/:\space
      \ttfamily##2\par}\endgroup}%
  \def\urladdr##1##2{\begingroup
    \@ifnotempty{##2}{\nobreak\indent{\itshape URL}%
      \@ifnotempty{##1}{, \ignorespaces##1\unskip}\/:\space
      \ttfamily##2\par}\endgroup}%
  \addresses
  \endgroup
}
\let\polishlcross=\l
\def\l{\ifmmode\ell\else\polishlcross\fi}
\renewcommand{\emptyset}{\varnothing}
\renewcommand{\setminus}{\smallsetminus}
\def\moverlay{\mathpalette\mov@rlay}
\def\mov@rlay#1#2{\leavevmode\vtop{%
   \baselineskip\z@skip \lineskiplimit-\maxdimen
   \ialign{\hfil$\m@th#1##$\hfil\cr#2\crcr}}}
\newcommand{\charfusion}[3][\mathord]{
    #1{\ifx#1\mathop\vphantom{#2}\fi
        \mathpalette\mov@rlay{#2\cr#3}
      }
    \ifx#1\mathop\expandafter\displaylimits\fi}
\newcommand{\eps}{\varepsilon}
\let\epsilon\varepsilon
\let\tilde\widetilde
\let\ldots\dots
\newtheoremstyle{case}{}{}{\normalfont}{}{\itshape}{:}{ }{}
\newtheorem{thm}{Theorem}
\newtheorem{lem}[thm]{Lemma}
\newtheorem{prop}[thm]{Proposition}
\newtheorem{conj}[thm]{Conjecture}
\newtheorem*{claim*}{Claim}
\theoremstyle{definition}
\newtheorem{defn}[thm]{Definition}
\newtheoremstyle{case}{}{}{\normalfont}{}{\itshape}{\normalfont:}{ }{}
\theoremstyle{case}
\newcommand{\oldqed}{}
\def\endofClaim{\hfill\scalebox{.6}{$\Box$}}
\newenvironment{claimproof}[1][Proof]{
  \renewcommand{\oldqed}{\qedsymbol}
  \renewcommand{\qedsymbol}{\endofClaim}
  \begin{proof}[#1]
}{
  \end{proof}
  \renewcommand{\qedsymbol}{\oldqed}
}
\let\subset\subseteq
\def\({\left(}
\def\){\right)}
\newcommand{\PP}{\mathbb{P}}
\def\cF{\mathcal{F}}
\def\cG{\mathcal{G}}
\def\cT{\mathcal{T}}
\newcommand*\patchAmsMathEnvironmentForLineno[1]{%
\expandafter\let\csname old#1\expandafter\endcsname\csname #1\endcsname
\expandafter\let\csname oldend#1\expandafter\endcsname\csname end#1\endcsname
\renewenvironment{#1}%
{\linenomath\csname old#1\endcsname}%
{\csname oldend#1\endcsname\endlinenomath}}%
\newcommand*\patchBothAmsMathEnvironmentsForLineno[1]{%
\patchAmsMathEnvironmentForLineno{#1}%
\patchAmsMathEnvironmentForLineno{#1*}}%
\begin{document}
%\linenumbers
\onehalfspace
%\doublespace

%TITLE, ETC
\title{2-universality in randomly perturbed graphs}
\author[O. Parczyk]{Olaf Parczyk}

\thanks{The author was partially supported by the Carl Zeiss Foundation.
}

%\shortdate
%\yyyymmdddate
%\settimeformat{ampmtime}
%\date{\today, \currenttime}
%\footskip=28pt

\address{Institut f\"ur Mathematik, TU Ilmenau,
  Weimarer Str.~25, 98684 Ilmenau, Germany}
\email{olaf.parczyk@tu-ilmenau.de}

\begin{abstract}
 
 A graph $G$ is called universal for a family of graphs $\cF$ if it contains every element $F \in \cF$ as a subgraph.
 Let $\cF(n,2)$ be the family of all graphs with maximum degree $2$.
 Ferber, Kronenberg, and Luh~[\emph{Optimal Threshold for a Random Graph to be 2-Universal}, to appear in Transactions of the American Mathematical Society] proved that there exists a $C$ such that for $p \ge C (n^{-2/3} \log^{1/3} n )$ the random graph $G(n,p)$ a.a.s~is $\cF(n,2)$-universal, which is asymptotically optimal.
 For any $n$-vertex graph $G_\alpha$ with minimum degree $\delta(G_\alpha) \ge \alpha n$ Aigner and Brandt~[\emph{Embedding arbitrary graphs of maximum degree two}, Journal of the London Mathematical Society \textbf{48} (1993), 39--51] proved that $G_\alpha$ is $\cF(n,2)$-universal for an optimal $\alpha \ge 2/3$.
 
 In this note, we consider the model of randomly perturbed graphs, which is the union $G_\alpha \cup G(n,p)$.
 We prove that a.a.s.~$G_\alpha \cup G(n,p)$ is $\cF(n,2)$-universal provided that $\alpha>0$ and $p=\omega(n^{-2/3})$.
 This is asymptotically optimal and improves on both results from above in the respective parameter.
 Furthermore, this extends a result of Böttcher, Montgomery, Parczyk, and Person~[\emph{Embedding spanning bounded degree subgraphs in randomly perturbed graphs}, arXiv:1802.04603 (2018)], who embed a given $F \in \cF(n,2)$ at these values.
 We also prove variants with universality for the family $\cF^\ell(n,2)$, all graphs from $\cF(n,2)$ with girth at least $\ell$.
 For example, there exists an $\ell_0$ depending only on $\alpha$ such that for all $\ell \ge \ell_0$ already $p=\omega(1/n)$ is sufficient for $\cF^\ell(n,2)$-universality.
\end{abstract}

\maketitle

\section{Introduction}

Finding cyclic structures is a natural problem in graph theory.
A famous result of Dirac~\cite{dirac1952some} states that any $n$-vertex graph $G$ with minimum degree $\delta(G) \ge n/2$ contains a Hamilton cycle if $n \ge 3$.
More generally, in extremal graph theory many results are of the form that for a sequence of $n$-vertex graphs $(F_n)_{n \ge 1}$ there exists some $\alpha>0$ such that any $n$-vertex graph $G_\alpha$ with $\delta(G_\alpha) \ge \alpha n$ contains $F_n$.

Investigating this type of containment questions for a typical graph gives us a different perspective.
In random graph theory properties of the model $G(n,p)$ are studied, which is the $n$-vertex \emph{binomial random graph} with each edge present independently at random with probability $p$.
The counterpart to the condition on the minium degree in this setup is a function $\hat{p} = \hat{p}(n) \colon \mathbb{N} \rightarrow [0,1]$ such that for $p=\omega(\hat{p})$ the probability that the $n$-vertex graph $F_n$ is contained in $G(n,p)$ tends to $1$ as $n$ tends to infinity, i.e.~$ \lim_{n \to \infty} \PP [F_n \subseteq G(n,p)]=1$.
In this case we say that $G(n,p)$ contains $F_n$ asymptotically almost surely (a.a.s.).
If $\hat{p}$ is optimal in the sense that $\lim_{n \to \infty} \PP [F_n \subseteq G(n,p)]=0$ for $p=o(\hat{p})$, we call $\hat{p}$ the \emph{threshold} for the property of containing $F_n$.
Bollobás and Thomason~\cite{BolTho87} proved that monotone properties, as subgraph containment, always admit a threshold.
For containing a Hamilton cycle Kor{\v{s}}unov~\cite{Kor76} and Pósa~\cite{Pos76} independently showed that the threshold is $\log n/n$.
Note that the expected number of Hamilton cycles already tends to infinity for $p = \omega(1/n)$, but the extra $\log n$-term ist needed to guarantee that the graph has a.a.s.~minimum degree $2$.
In the following discussion we will work with $p=\omega(\hat{p})$ even though for many results a stronger variant is proved, where $p \ge C \hat{p}$ for some absolute constant $C$ depending only on $(F_n)_{n \ge 1}$.

\subsection{Randomly perturbed graphs}

Combining the two models from random and extremal graph theory, Bohman, Frieze, and Martin~\cite{bohman2003many} introduced the model of \emph{randomly perturbed graphs} $G_\alpha \cup G(n,p)$ for any $\alpha >0$, where, as above, $G_\alpha$ is any $n$-vertex graph with minimum degree $\delta(G_\alpha) \ge \alpha n$.
They show that $p = \omega(1/n)$ is sufficient to a.a.s.~guarantee a Hamilton cycle in $G_\alpha \cup G(n,p)$ for any $G_\alpha$.
When $G_\alpha$ is the complete unbalanced bipartite graph $K_{\alpha n,(1-\alpha)n}$ then at least a linear number of egdes is needed.
Using both graphs $G_\alpha$ and $G(n,p)$ this result dramatically improves on the $\alpha\ge1/2$ needed in $G_\alpha$ alone, even though adding $G(n,p)$ is a relatively small random perturbation.
On the other hand it is also a $\log n$-term better than the threshold in $G(n,p)$ alone, which is plausible as $G_\alpha$ guarantees a large minimum degree.

In recent years this model attracted a lot of attention.
For a bounded degree spanning tree Krivelevich, Kwan, and Sudakov~\cite{krivelevich2015bounded} proved that $p=\omega(1/n)$ also is sufficient in $G_\alpha \cup G(n,p)$.
In $G_\alpha$ alone $\alpha > 1/2$ is needed~\cite{KSS_AlonYuster} and in $G(n,p)$ only recently Montgomery~\cite{M14a} was able to show that again $\log n /n$ gives the threshold.
Similar results were proved for factors in~\cite{BTW_tilings} and for powers of Hamilton cycles and general bounded degree graphs in~\cite{BMPPMinDegree}.
Together with Böttcher, Montgomery, and Person in~\cite{BMPPMinDegree} we developed a general method for embeddings in randomly peturbed graphs that also implies the previous results for the Hamilton cycle, factors, and bounded degree trees.
For all these graphs $G_\alpha = K_{\alpha n,(1-\alpha)n}$ shows that the results are optimal.
Beyond this there are many interesting results for other properties~\cites{bohman2004adding,DT_ramsey}, trees with large degrees~\cite{JK_Trees}, hypergraphs~\cites{krivelevich2015cycles,MM_HyperPeturbed,HZ_perturbedcycles,BHKM_powers}, and with larger bounds on $\alpha$~\cites{RSR08_Dirac,NT_sprinkling}.

\subsection{Universality}

We call a graph $G$ \emph{universal} for a family of graphs $\cF$ (short $\cF$-universal) if $G$ contains every $F \in \cF$ as a subgraph.
In the random graph model when $\cF$ is large it requires substantial more work to prove that $G(n,p)$ is a.a.s.~$\cF$-universal, then showing that a given $F \in \cF$ is a.a.s.~contained in $G(n,p)$.
For the family $\cT(n,\Delta)$ of all $n$-vertex graphs with maximum degree bounded by $\Delta$, Montgomery~\cite{M14a} managed to prove that a.a.s.~$G(n,p)$ is $\cT(n,\Delta)$-universal if $p=\omega(\log n/n)$.
Krivelevich, Kwan, and Sudakov~\cite{krivelevich2015bounded} asked if extending on their result also $\cT(n,\Delta)$-universality holds in $G_\alpha \cup G(n,p)$ for $p=\omega(1/n)$ and $\alpha>0$.
In~\cite{BHKMPPtrees} we proved this together with Böttcher, Han, Kohayakawa, Montgomery, and Person building on the method from~\cite{BMPPMinDegree}.

In this note we want to investigate universality with respect to the family $\cF(n,2)$, which contains all $n$-vertex graphs with maximum degree at most $2$.
Graphs in this family are the disjoint union of paths and cycles.
In $G_\alpha$ alone Aigner and Brandt~\cite{AB_maxdegree2} showed that $\alpha\ge  2/3$ ($\delta(G_\alpha) \ge (2n-1)/3$ to be precise) is sufficient to find any graph from $\cF(n,2)$.
On the other side it was proved by Ferner, Kronenberg, and Luh~\cite{ferber2016optimal} that with $p = \omega(n^{-2/3} \log^{1/3}n)$ a.a.s.~$G(n,p)$ is $\cF(n,2)$-universal.
The disjoint union of $n/3$ triangles ($K_3$-factor) is seemingly the hardest graph to embed and the threshold, which follows from a famous result of Johannson, Kahn, and Vu~\cite{JohanssonKahnVu_FactorsInRandomGraphs}, shows that this is optimal.
In~\cite{BMPPMinDegree} we already proved that for a given $F \in \cF(n,2)$ it is a.a.s.~contained in $G_\alpha \cup G(n,p)$ for $p=\omega(n^{-2/3})$ and $\alpha>0$, and in this paper we extend this to $\cF(n,2)$-universality.
The following is our main result, which is asymptotically optimal when $\alpha < 1/3$.

\begin{thm}\label{thm:2-universal}
  Let $\alpha>0$, $p = \omega(n^{-2/3})$, and $G_\alpha$ an $n$-vertex graph with minimum degree $\delta(G_\alpha) \ge \alpha n$.
  Then a.a.s.~$G_\alpha \cup G(n,p)$ is $\cF(n,2)$-universal.
\end{thm}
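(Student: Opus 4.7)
The plan is to adapt the method of Böttcher--Montgomery--Parczyk--Person~\cite{BMPPMinDegree}, which shows containment of a fixed $F \in \cF(n,2)$, to the universal setting where the same $G(n,p)$ must accommodate every $F$ simultaneously. The main tool is a sprinkling decomposition: write $p = p_1 + p_2$ with $p_1,p_2 = \omega(n^{-2/3})$, so that $G(n,p)$ is (essentially) the union of two independent random graphs $G_1 \sim G(n,p_1)$ and $G_2 \sim G(n,p_2)$, each with edge probability above the triangle-factor threshold in the perturbed setting.

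Every $F \in \cF(n,2)$ decomposes canonically as $F = F_T \sqcup F_L$, where $F_T$ is the union of the triangle-components and $F_L$ consists of paths and cycles of length at least $4$. Let $t = t(F)$ be the number of triangles of $F$, so $|V(F_T)| = 3t$. The central observation is that $F_L$ has girth $\ge 4$, and hence is substantially easier to embed in a perturbed graph than $F$ itself: the triangle bottleneck $p = \omega(n^{-2/3})$ in BMPP disappears for girth $\ge 4$ targets, so $G_\alpha \cup G_2$ universally embeds every such $F_L$ (on an appropriate subset of vertices), at least under standard min-degree inheritance.

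Thus I would proceed in three steps. First, use $G_1$, together with $G_\alpha$, to construct a flexible \emph{triangle reservoir} $\cT$: a collection of vertex-disjoint triangles on a set $V_\cT$ covering almost all of $V$ so that, for every $t$ with $0 \le 3t \le n$, one can select $t$ triangles of $\cT$ whose removal leaves a vertex set $V' \subseteq V$ on which the deterministic graph $G_\alpha[V']$ still has linear minimum degree. This uses BMPP's triangle-factor method plus a random reservation trick to make the selection flexible. Second, given $F$, choose $t$ triangles from $\cT$ to realise $F_T$. Third, embed $F_L$ into the leftover vertex set $V'$ using $G_\alpha[V'] \cup G_2[V']$, appealing to the girth-$\ge 4$ embedding for perturbed graphs. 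Universality is immediate because both $\cT$ and the embedding procedure for girth-$\ge 4$ graphs are built once and for all, before $F$ is revealed.

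The hardest step will be the construction and flexibility of the triangle reservoir $\cT$ in step one: for each possible $t$, the chosen $t$-subset must avoid destroying the degree structure needed for embedding $F_L$, and the resulting leftover $V'$ must be sufficiently generic (say, almost random) so that girth-$\ge 4$ universality on $V'$ can be invoked a.a.s. The extreme cases require separate care --- when $t$ is $O(1)$, a direct greedy argument in $G_1$ suffices; when $t$ is close to $n/3$ the graph $F$ is essentially a triangle factor and one invokes BMPP's factor theorem on $V$ directly; the intermediate regime is the delicate one and is where most of the technical work of the proof will go. As an alternative route, one could instead quantify BMPP's probability bound for a single $F$ to be $e^{-\omega(\sqrt{n})}$ and then take a union bound over the $p(n) = e^{O(\sqrt{n})}$ component-profiles of graphs in $\cF(n,2)$; this pathway depends on a non-trivial strengthening of the BMPP estimates but sidesteps building the reservoir explicitly.
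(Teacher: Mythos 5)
There is a genuine gap, and it sits exactly where the real difficulty of the theorem lies. Your third step assumes that $G_\alpha[V']\cup G_2[V']$ \emph{universally} embeds every spanning girth-$\ge 4$ graph $F_L$ on the leftover set $V'$, citing ``the girth-$\ge 4$ embedding for perturbed graphs''. No such universality statement is available from~\cite{BMPPMinDegree}: that paper gives containment of a single, fixed $F$ a.a.s., and upgrading single containment to universality is precisely the content of the present paper (indeed the girth-$\ge 4$ case is Theorem~\ref{thm:girth_2-universal} with $\ell=4$, proved by the same machinery as the triangle case: a.a.s.\ pseudorandom counting properties of $G(n,p)$, an initial embedding of a linear piece that makes every reservoir set $B_{\tilde F,H}(u,v)$ linear, and the switching step). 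So as written your argument is circular, or at best reduces the theorem to an unproven statement of the same nature. Your fallback route does not repair this: a union bound over the $e^{\Theta(\sqrt n)}$ isomorphism types in $\cF(n,2)$ would require single-embedding failure probability $e^{-\omega(\sqrt n)}$ at $p=\omega(n^{-2/3})$, which is not what the arguments of~\cite{BMPPMinDegree} give and is not known near the threshold; you correctly flag this as a ``non-trivial strengthening'', but that is the whole problem, not a side issue.

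The triangle-reservoir step has independent quantitative problems as well. If the number of triangle components $t$ is close to $n/3$, then $|V'|=n-3t$ can be much smaller than $n$; on $m=|V'|$ vertices a spanning girth-$\ge 4$ embedding in the perturbed model needs $p=\omega(m^{-3/4})$, and $\omega(n^{-2/3})$ fails this once $m\ll n^{8/9}$ (e.g.\ $m\approx n^{4/5}$). Likewise, ``$\delta(G_\alpha[V'])$ linear in $|V'|$'' cannot in general be arranged by a clever choice of which triangles to delete when almost all of $V$ is removed, and building a single reservoir $\cT$ that works simultaneously for every $t$ is itself a statement of the same flavour as the theorem. For comparison, the paper avoids all of this by never trying to maintain a min-degree condition on an induced leftover: it embeds the bulk of $F$ (including most triangles, via the cycle-counting property~\ref{def:cycles}) directly into the pseudorandom graph, and handles only an $\varepsilon n$-sized remainder of isolated edges and triangles at the end through the per-pair reservoir sets $B_{\tilde F,H}(u,v)$ and local switching, which is what makes the argument uniform over all $F\in\cF(n,2)$.
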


In fact, as in~\cite{ferber2016optimal} we prove a stronger statement when there are no short cycles.
Let $\cF^\ell(n,2)$ be the family of all graphs with maximum degree at most $2$ and girth at least $\ell$, which implies that there are no cycles of length less than $\ell$.

\begin{thm}\label{thm:girth_2-universal}
	Let $\alpha>0$, $\ell \ge 3$ an integer, $p = \omega(n^{-(\ell-1)/\ell})$, and $G_\alpha$ an $n$-vertex graph with minimum degree $\delta(G_\alpha) \ge \alpha n$.
	Then a.a.s.~$G_\alpha \cup G(n,p)$ is $\cF^{\ell}(n,2)$-universal.
\end{thm}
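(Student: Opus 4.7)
The plan is to adapt the single-target embedding of~\cite{BMPPMinDegree}, which handles a fixed $F \in \cF(n,2)$ at $p=\omega(n^{-2/3})$, to the universal setting, in the spirit of the universality argument of~\cite{ferber2016optimal} for $G(n,p)$ alone. The hardest member of $\cF^\ell(n,2)$ is essentially a $C_\ell$-factor, whose threshold in $G(n,p)$ is (up to logs) $n^{-(\ell-1)/\ell}$; this is why the exponent in Theorem~\ref{thm:girth_2-universal} is asymptotically optimal, and it also suggests the right division of labour: random edges produce short cycles, while long structures are carried by $G_\alpha$.

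First split $G(n,p)=G_1\cup G_2\cup G_3$ into three independent binomial random graphs of density $\omega(n^{-(\ell-1)/\ell})$ each. Using $G_\alpha\cup G_1$, reserve a small absorber set $A\subset V$ with $|A|=o(n)$, engineered so that for any ``leftover profile'' of constantly many vertices the induced subgraph on $A$ together with these leftovers contains a spanning member of $\cF^\ell$ with any prescribed path/cycle lengths. The construction of $A$ exploits the $\alpha n$ neighbours per vertex in $G_\alpha$ to give many candidate gadgets, while the edges from $G_1$ supply the short cycles inside the absorber that girth considerations would otherwise forbid.

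Now let $F\in\cF^\ell(n,2)$ be arbitrary and decompose it into paths and cycles sorted by length. Fix a large constant $L=L(\alpha,\ell)$. Using $G_\alpha\cup G_2$, embed inside $V\setminus A$ all path components and all cycles of length at least $L$ by an Aigner--Brandt/P\'osa-rotation style traversal, closing each cycle with one or two edges from $G_2$; here the length flexibility makes rotations forgiving. For each short length $\ell\le m<L$, embed the prescribed number of disjoint $C_m$'s as a $C_m$-packing via the method of~\cite{BMPPMinDegree}, which applies in $G_\alpha\cup G_2$ because the needed density $n^{-(m-1)/m}$ is at most $n^{-(\ell-1)/\ell}$. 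Finally, use $G_3$ together with the absorber to incorporate the remaining $|A|+O(1)$ vertices into $F$ in the shape demanded by $F$.

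The main obstacle is the universality step: when $A$ is reserved, $F$ is not yet known, and the cycle-length profile of $F$ dictates what the absorber must eventually accomplish. The resolution is to build enough slack into $A$ so that, after the bulk embedding, the residual vertices can be tiled into any mixture of short cycles of length $\ge\ell$ and path-endpoints. This is where the perturbed model beats the pure random model of~\cite{ferber2016optimal}: the minimum-degree condition supplies so many gadgets per vertex that the robust absorber can be built without the extra logarithmic factor otherwise required, and the same feature dispenses with the need to fix $F$ in advance as in~\cite{BMPPMinDegree}.
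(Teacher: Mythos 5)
Your overall division of labour (short cycles from random edges, long structure from $G_\alpha$, leftover handled by a flexible structure) is in the right spirit, but the step that carries all the difficulty --- the absorber --- does not work as described, and this is a genuine gap rather than a detail. You reserve a set $A$ with $|A|=o(n)$ \emph{before} seeing $F$ and ask that at the end $A$ (plus $O(1)$ stragglers) be spanned by whatever residual piece of $F$ remains, e.g.\ for $\ell=3$ by a perfect triangle factor on $A$. But there is no supply of such gadgets inside a sublinear set: the minimum-degree hypothesis gives no control whatsoever over the edges of $G_\alpha$ inside, or into, a fixed set of size $o(n)$ (the set $A$ may be independent in $G_\alpha$, and a given vertex may have no $G_\alpha$-neighbour in $A$ at all), so the ``many gadgets per vertex'' you invoke cannot be anchored in $A$; and the random edges are too sparse, since a spanning $C_\ell$-structure inside a set of size $m=o(n)$ needs density on the order of $m^{-(\ell-1)/\ell}\gg n^{-(\ell-1)/\ell}$, which $p=\omega(n^{-(\ell-1)/\ell})$ need not reach. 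Splitting $G(n,p)$ into $G_1,G_2,G_3$ does not help with either obstruction. Moreover, the universal flexibility you require of $A$ (``any prescribed mixture of short cycles and path-endpoints'', ``enough slack'') is exactly the statement that needs proof, and no mechanism is offered.

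The paper avoids precisely this trap by never reserving a small leftover set. For each given $F$ it first embeds a \emph{linear}-sized piece $F[U]$ of $F$ itself, choosing the images of $\beta n$ cherries (or $C_3$'s/$C_4$'s when $\ell\le 4$) uniformly at random among copies in $G$, so that after a union bound over all pairs $(u,v)$ every reservoir set $B_{\tilde F,H}(u,v)$ --- embedded vertices whose $\tilde F$-neighbourhood lies in $N_{G_\alpha}(v)$ and which are adjacent to $u$ in $G_\alpha$ --- has size $\Omega(n)$; note these sets live inside the linear neighbourhoods provided by $G_\alpha$, which is the only place the minimum-degree condition can be exploited. It then covers all but $\varepsilon n$ vertices greedily (long path lemma plus cycle counts between linear sets), and finishes by a switching argument: each remaining vertex is embedded onto an \emph{already used} vertex taken from a reservoir set of linear size, and the displaced vertex is moved to an uncovered spot. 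Thus the final vertices are never forced into a small vertex set, which is exactly what makes $p=\omega(n^{-(\ell-1)/\ell})$ suffice; your proposal would need either this idea or a genuinely new construction of an absorber that works without $G_\alpha$-edges into small sets.
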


Here the bound on $p$ is determined by the threshold of an almost spanning $C_\ell$-factor in $G(n,p)$ (c.f.~Lemma~\ref{lem:cycle}).
Theorem~\ref{thm:2-universal} follows from Theorem~\ref{thm:girth_2-universal} with $\ell=3$.
When the bound on the girth gets large enough in terms of $\alpha$ we can further improve this and show that $p = \omega (1/n)$ is enough.

\begin{thm}\label{thm:large_2-universal}
	For every $\alpha > 0$ there exists an $\ell_0>0$ such that for any integer $\ell \ge \ell_0$ and $G_\alpha$ an $n$-vertex graph with minimum degree $\delta(G_\alpha) \ge \alpha n$ the following holds.
	For $p = \omega(1/n)$ a.a.s.~$G_\alpha \cup G(n,p)$ is $\cF^\ell(n,2)$-universal.
\end{thm}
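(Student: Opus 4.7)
The plan is to adapt the absorbing-method framework for randomly perturbed graphs developed in \cite{BMPPMinDegree} and extended to universality for bounded-degree trees in \cite{BHKMPPtrees}, while exploiting the crucial consequence of very large girth: every $F\in\cF^\ell(n,2)$ has at most $n/\ell$ cycle components.

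First, I would choose $\ell_0=\ell_0(\alpha)$ sufficiently large (in particular $\ell_0\gg 1/\alpha$) and split $G(n,p)$ by sprinkling into three independent random graphs $G_1,G_2,G_3$, each still of density $\omega(1/n)$. These copies play three distinct roles: $G_1$ is used to build an absorbing gadget, $G_2$ supplies the backbone of the embedding, and $G_3$ is reserved for closing the cycle components. Given an arbitrary $F\in\cF^\ell(n,2)$, I decompose it into isolated vertices and short path components of length at most some constant $L_0=L_0(\alpha,\ell)$; long path components; and cycle components $C_{a_1},\dots,C_{a_k}$ of lengths $a_i\ge\ell$, with $k\le n/\ell$.

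Using $G_\alpha\cup G_1$ together with a connecting-lemma argument (available because $\delta(G_\alpha)\ge\alpha n$ provides many common neighbours), I build a universal absorbing structure $A$ on $o(n)$ vertices, capable of accommodating any prescribed collection of isolated vertices and short paths of total size $o(n)$; this follows the template-absorber philosophy of \cite{BHKMPPtrees}, adapted from bounded-degree trees to short paths. On the complement $V\setminus A$ I then use $G_\alpha\cup G_2$ together with the Hamilton-connectedness of randomly perturbed graphs (available at $p=\omega(1/n)$ by Bohman--Frieze--Martin style arguments from \cite{bohman2003many}) to find a vertex-disjoint union of paths whose lengths realise the long path components together with the cycle components of $F$, the latter initially embedded as paths rather than cycles. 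Finally, I locate a closing edge for each of the $k\le n/\ell$ ``open cycles'' in $G_\alpha\cup G_3$, and simultaneously apply the absorber $A$ to pick up the remaining short pieces of $F$.

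The main obstacle is the closure step: since $p=\omega(1/n)$, no \emph{specific} edge between two prescribed endpoints can be guaranteed in $G(n,p)$, so the flexibility must lie on the embedding side. I would therefore arrange the path-cover step so that for each cycle to be closed the two path endpoints are selected from large candidate sets $U_i,V_i\subseteq V\setminus A$ of linear size; then the existence of $k\le n/\ell$ vertex-disjoint closing edges, one between each $U_i$ and $V_i$, follows from a Hall-type argument relying only on $\delta(G_\alpha)\ge\alpha n$ once $\ell\ge\ell_0\gg 1/\alpha$. The secondary obstacle, namely universality over the uncountable family $\cF^\ell(n,2)$, is handled as in \cite{BHKMPPtrees}: all random choices (the sprinkling, the construction of $A$, and the reservoir edges for connections) are fixed in advance, and the embedding of any particular $F$ is then produced deterministically from the typical structure of $G_\alpha\cup G(n,p)$.
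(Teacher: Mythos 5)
Your plan has the right philosophy (fix the random structure first, reserve flexibility for the endgame), but two of its load-bearing steps do not hold up as stated. First, the absorber. You build $A$ so that it can accommodate ``isolated vertices and short paths'', but the hardest members of $\cF^\ell(n,2)$ have no such components at all: for instance a perfect $C_\ell$-factor (or any disjoint union of cycles of length exactly $\ell\ge\ell_0$) covers every vertex by a long cycle, so nothing in your decomposition is ever fed to $A$, while every vertex of $A$ must be covered by segments of long cycles --- a task your absorber is not claimed to perform. The paper resolves exactly this by cutting the cycles themselves: it deletes isolated edges and triangles totalling about $\varepsilon n$ vertices from the cycles (Lemma~\ref{lem:2-partition}), embeds the resulting paths and short cycles, and re-inserts the deleted pieces at the very end by a local switching argument. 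Without an analogous step, your absorber does not address the case that forces the theorem to be hard.

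Second, the closing step. You assert that the two endpoints of each opened cycle ``are selected from large candidate sets $U_i,V_i$ of linear size'' and that vertex-disjoint closing edges then follow ``from a Hall-type argument relying only on $\delta(G_\alpha)\ge\alpha n$''. Minimum degree $\alpha n$ does not give an edge between two arbitrary linear sets (take $G_\alpha=K_{\alpha n,(1-\alpha)n}$ with $U_i,V_i$ inside the larger side), so the closing edge must come from $G(n,p)$, where at $p=\omega(1/n)$ edges between linear sets do exist a.a.s.\ --- but that is not what you wrote, and more importantly the asserted flexibility has no mechanism behind it: after a (near-)spanning path cover essentially every vertex is already used, so an endpoint cannot simply ``range over'' a linear candidate set. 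Realizing this flexibility is precisely the technical heart of the problem; the paper achieves it with the reservoir sets $B_{\tilde F,H}(u,v)$, built in a first randomized embedding phase (Lemma~\ref{lem:embeddingF_1}) so that for \emph{every} pair $u,v$ there are $\Omega(n)$ already-embedded vertices that can be swapped out, after which an edge of $G$ between two such linear reservoir sets (Property~\ref{def:edges}) completes the connection. Your proposal would need either this switching device or a genuinely spanning absorber for cycle segments; as written, both the endgame of the path cover on $V\setminus A$ and the cycle closures rest on flexibility that has not been constructed.
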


Our proof roughly gives $\ell_0 = 10^6 \alpha^{-3}$, where me make no effort in optimizing the constant in front of $\alpha^{-3}$.
The optimal dependence between $\ell_0$ and $\alpha$ remains open, where $\ell_0 \ge \alpha^{-1}$ follows from $G_\alpha = K_{\alpha n,(1-\alpha)n}$.
From the proof of Ferber, Kronenberg, and Luh~\cite{ferber2016optimal} together with a connecting result by Montgomery~\cite[Theorem 4.3]{M14b} one can deduce the following in $G(n,p)$ alone.
For $\ell \ge 10^4 \log^2 n$ and $p \ge \log^5 n /n$ a.a.s.~$G(n,p)$ is $\cF^\ell(n,2)$-universal.

For larger $\Delta$ the threshold for the $K_{\Delta+1}$-factor in $G(n,p)$ is $(\log^{1/(\Delta+1)}n /n)^{2/(\Delta+1)}$~\cite{JohanssonKahnVu_FactorsInRandomGraphs} and it is commonly believed that this also gives the threshold for $\cF(n,\Delta)$-universality, where here $\cF(n,\Delta)$ is the family of all $n$-vertex graphs with maximum degree $\Delta$.
But until now not even the single containment case is solved and the best known result is an almost spanning version by Ferber, Luh, and Nguyen~\cite{ferber2016embedding} showing that for $F \in \cF((1-\varepsilon)n,\Delta)$ with $\Delta \ge 5$ and $p=\omega(\log^{1/(\Delta+1)}n /n)^{2/(\Delta+1)}$ a.a.s.~$G(n,p)$ contains a copy of $F$. 
For spanning universality Ferber and Nenadov~\cite{FerbeNenadovSpanning} proved that $p=\omega(\log^3 n / n)^{1/(\Delta-1/2)}$ is sufficient, which is just below a natural barrier of $(\log n/n)^{1/\Delta}$ that was known before~\cite{DKRR15} and still a polynomial away from the lower bound.
The fact that an $F \in \cF(n,\Delta)$ with $\Delta \ge 5$ and $p=\omega(n^{-2/(\Delta+1)})$ is a.a.s.~contained in $G_\alpha \cup G(n,p)$~\cite{BMPPMinDegree}, together with Theorem~\ref{thm:2-universal}, support the following conjecture.
\begin{conj}
  Let $\alpha>0$, $\Delta \ge 3$ an integer, $p = \omega(n^{-2/(\Delta+1)})$, and $G_\alpha$ an $n$-vertex graph with minimum degree $\delta(G_\alpha) \ge \alpha n$.
  Then a.a.s.~$G_\alpha \cup G(n,p)$ is $\cF(n,\Delta)$-universal.
\end{conj}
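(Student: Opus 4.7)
The plan is to adapt the absorber-plus-random-embedding framework of Böttcher, Montgomery, Parczyk, and Person for single-graph containment in randomly perturbed graphs into a universality statement for $\cF(n,\Delta)$, extending both the proof of Theorem~\ref{thm:2-universal} (the $\Delta=2$ case handled in this note) and the tree-universality result of Böttcher, Han, Kohayakawa, Montgomery, Parczyk, and Person. The extremal target is the $K_{\Delta+1}$-factor, whose threshold in $G(n,p)$ alone is $(\log^{1/(\Delta+1)}n/n)^{2/(\Delta+1)}$; as in all prior perturbed-graph results, the conjecture shaves off the polylogarithmic factor, and this saving relative to pure randomness is precisely what the minimum-degree condition on $G_\alpha$ must buy.

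First, I would reserve a sublinear set $A \subseteq V(G_\alpha)$ together with a sparse sub-sample of random edges of density $p' = o(p)$, and build inside $G_\alpha[A] \cup G(n,p')$ a universal absorber: a gadget that admits an embedding of any small graph $F'$ of maximum degree $\Delta$ on $|A|$ vertices, regardless of its structure. Constructing such a flexible absorber is the first substantial task; it would generalise the cycle-based absorbers used for $\Delta=2$, most likely by adapting Montgomery's template for tree-universal absorbers to accommodate cycles and dense pieces. Second, given any target $F \in \cF(n,\Delta)$, I would equitably split the bulk of $F$ into $K_{\Delta+1}$-like chunks via a Hajnal--Szemer\'edi-style decomposition, together with a small connecting remainder of size $o(n)$. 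The random graph $G(n,p)$ at $p = \omega(n^{-2/(\Delta+1)})$, augmented by $G_\alpha$, is expected to contain a $K_{\Delta+1}$-factor on any balanced $(1-o(1))$-fraction of the vertices: this perturbed analogue of the Johansson--Kahn--Vu theorem, without the log factor, would embed the bulk, after which the remainder of $F$ is routed into the absorber via random connecting edges.

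The central obstacle is twofold. Universality demands that the embedding scheme succeed simultaneously for all exponentially many targets $F \in \cF(n,\Delta)$, so each step must either be target-independent (the absorber, the random structure) or amenable to an on-line selection of the next piece to embed; the $\Delta=2$ argument gets away with this because maximum-degree-$2$ graphs admit a rigid cycle-and-path decomposition, but for $\Delta \ge 3$ no such canonical decomposition is available and one must sample the decomposition adaptively after seeing $F$. More seriously, the single-graph $K_{\Delta+1}$-factor theorem in $G_\alpha \cup G(n,p)$ at $p = \omega(n^{-2/(\Delta+1)})$ is itself an open problem---precisely the factor case of the stated conjecture---so any honest proof must either establish it first or exploit the fact that a generic $F \in \cF(n,\Delta)$ contains sparser regions away from the $K_{\Delta+1}$-factor extremal instance and can thus be embedded with slack not available when $F$ is literally a clique factor. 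Resolving this trade-off between the worst-case hardness concentrated on $K_{\Delta+1}$-factors and the extra structural flexibility in all other targets appears to be the real mathematical difficulty.
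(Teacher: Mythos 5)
There is nothing here to verify: the statement you are addressing is stated in the paper as a \emph{conjecture}, not a theorem. The paper offers no proof of it (it only announces that the case $\Delta=3$ will be treated in a subsequent paper), and your submission is likewise not a proof but a research plan whose decisive steps you yourself flag as unresolved. The concrete gaps are these. First, the ``universal absorber'' for arbitrary $F'\in\cF(|A|,\Delta)$ inside $G_\alpha[A]\cup G(n,p')$ is asserted, not constructed; for $\Delta\ge 3$ this is precisely where the known techniques break, since the reservoir/switching argument used in this paper for $\Delta=2$ relies on the rigid path-and-cycle structure (Lemma~\ref{lem:2-partition}, Lemma~\ref{lem:embeddingF_1}) and on a union bound over only $n^2$ pairs $(u,v)$, and no analogue flexible enough to serve all of $\cF(n,\Delta)$ simultaneously is provided or cited. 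Second, your assessment of the state of the art is off: the single-graph $K_{\Delta+1}$-factor in $G_\alpha\cup G(n,p)$ at $p=\omega(n^{-2/(\Delta+1)})$ is \emph{not} open --- it follows from the tilings result of Balogh, Treglown, and Wagner~\cite{BTW_tilings}, and containment of any fixed $F\in\cF(n,\Delta)$ with $\Delta\ge 5$ at these densities is proved in~\cite{BMPPMinDegree}, as the paper itself points out when motivating the conjecture. So the ``central obstacle'' is not the factor theorem but exactly the universality step you leave open: making one embedding scheme work for all exponentially many targets without a target-dependent probabilistic argument. As it stands, the proposal restates the difficulty of the conjecture rather than resolving it, and in particular cannot be compared favourably or unfavourably with a paper proof that does not exist.
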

In a subsequent paper we will prove this conjecture for $\Delta=3$.

\subsection{Proofsketch}

The proof of our results essentially follows the approach in~\cite{BHKMPPtrees} but instead of trees we now have to work with a union of cycles.
To obtain a universality result we show that a.a.s.~$G(n,p)$ satisfies certain pseudorandom properties (c.f.~Definition~\ref{def:graphs} and Proposition~\ref{prop:expander}) and, therefore, we can work in a deterministic graph $H=G_\alpha \cup G$.
From $F \in \cF^\ell(n,2)$ in Step~\ref{step1} we embed a small, but linear sized, subgraph $F'$ of $F$ into $G$ (c.f.~Lemma~\ref{lem:2-partition} and~\ref{lem:embeddingF_1}) with the additional property that for any vertex $v \in V(H)$ there is a linear set $B(v)$ of vertices which can be replaced by $v$ in the embedding of $F'$.
Furthermore any other vertex $u \in V(G)$ has linear degree into all the sets $B(v)$.
This reservoir property is essential for the final step of our embedding process.
In Step~\ref{step2} we extend the embedding of $F'$ to an embedding containing all but $\varepsilon n$ vertices of $F$.
This is easy, because we only have to embed small cycles and some longer paths (c.f.~Lemma~\ref{lem:longpath} and~\ref{lem:cycle}).
Finally, in Step~\ref{step3}, we can finish the embedding of $F$ by using our switching technique from~\cite{BMPPMinDegree}.
The advantage is that instead of embedding into the small left over, we can embed the remaining vertices in some set $B(v)$ of linear size and then afterwards alter the embedding accordingly.

The rest of this paper is structured as follows.
In Section~\ref{sec:tools} we collect helpful tools that we will use in our proof.
Then, in Section~\ref{sec:overview}, we give a more detailed overview of the proof from Theorem~\ref{thm:girth_2-universal}, where we properly define the pseudorandom properties that we require from $G(n,p)$, the reservoir sets and also state the lemmas involving these.
In Section~\ref{sec:proof} we give the proof of Theorem~\ref{thm1}, which, together with Proposition~\ref{prop:expander} from Section~\ref{sec:overview}, implies Theorem~\ref{thm:girth_2-universal}.
Finally, in Section~\ref{sec:auxiliary}, we prove the remaining statements, Proposition~\ref{prop:expander} and Lemma~\ref{lem:2-partition} and~\ref{lem:embeddingF_1}.

\subsection{Notation and tools}
\label{sec:tools}

Throughout the paper we will use standard graph theoretic notation following~\cites{janson2011random,KF_Random}.
We collect the most relevant definitions here and give some more later.
Let $G$ and $H$ be graphs.
We denote the set of \emph{vertices} by $V(G)$ and the set of \emph{edges} by $E(G)$.
For a set $V' \subseteq V$ we denote by $G[V']$ the \emph{subgraph of $G$ induced on $V'$} and by $H \setminus G$ the subgraph of $H$ induced on $V(H) \setminus V(G)$.
Also, for $v \in V(G)$ the set of \emph{neighbours} of $v$ in $G$ is $N_{G}(v)$.
A sequence of distinct vertices $v_0,\dots,v_k$ with $v_iv_{i+1} \in E(G)$ for $i=0,\dots,k-1$ is called a \emph{path of length $k$} in $G$.
We write $d_G(u,v)$ for the \emph{distance} between two vertices $v,u \in V(G)$, which is the length of a shortest path in $G$ between them and $\infty$ if there is no path.
Furthermore, we slightly abuse notation by writing $a = b \pm c$ for $a \in [b-c,b+c]$ and $b \pm c$ for some number in the interval $[b-c,b+c]$.

We will use the following result by Ben-Eliezer, Krivelevich, and Sudakov~\cite{BKS_SizeRamseyPath} which enables us to find a long path in any graphs that has an edge between any two sets of linear size.

\begin{lem}
	\label{lem:longpath}
	Let $\varepsilon>0$ and assume that $G$ is a graph on $n$ vertices, containing an edge between any two disjoint subsets $V_1,V_2 \subseteq V$ of size $|V_1|,|V_2| \ge \varepsilon n$.
	Then $G$ contains a path of length at least $n-2 \varepsilon n$.
\end{lem}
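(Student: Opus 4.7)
The plan is to use the standard depth-first search (DFS) argument, which is the natural way to turn a hypothesis like ``every pair of disjoint linear-sized sets has an edge between them'' into the existence of a long path.

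First I would run DFS on $G$, maintaining the usual tripartition of $V(G)$ into three sets: the set $A$ of fully processed vertices, the set $B$ of vertices currently on the stack, and the set $C$ of not-yet-visited vertices. Two standard features of DFS drive the proof: (i) at every moment the vertices in $B$, read in stack order, form a path in $G$, so $|B|-1$ is a lower bound on the length of some path in $G$; and (ii) there is never an edge of $G$ between $A$ and $C$, since such an edge would have been traversed when its endpoint in $A$ was the top of the stack. I would also record that each elementary step of the algorithm moves a single vertex (from $C$ to $B$ on a push, including when we start a new DFS tree, or from $B$ to $A$ on a pop), so that $|A|$ is nondecreasing and passes through every integer value from $0$ to $n$ in unit steps.

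With this in place, I would argue by contradiction. Assume $G$ has no path of length at least $n - 2\varepsilon n$. Then (i) forces $|B| \le n - 2\varepsilon n$ throughout the execution, hence $|A|+|C| \ge 2\varepsilon n$ at every moment. Since $|A|$ climbs from $0$ to $n$ in unit increments, there is a moment at which $|A| = \lceil \varepsilon n \rceil$; at that moment both $|A| \ge \varepsilon n$ and $|C| \ge 2\varepsilon n - \lceil \varepsilon n \rceil \ge \varepsilon n$ (up to a harmless rounding, which can also be avoided by assuming $\varepsilon n$ is an integer without loss of generality). Applying the hypothesis to the disjoint sets $A$ and $C$ at this moment produces an edge between them, contradicting (ii), and proving the lemma.

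I do not expect any real obstacle here: the only bookkeeping detail is choosing the moment when $|A|$ and $|C|$ are simultaneously at least $\varepsilon n$, which is a one-line argument using the fact that $|A|$ changes by at most one per step. The DFS framework does essentially all the work, and the same approach is the standard route used in the literature to deduce long paths from weak expansion assumptions.
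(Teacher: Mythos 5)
Your DFS argument is correct and is essentially the same proof as in the source the paper cites for this lemma (Ben-Eliezer--Krivelevich--Sudakov); the paper itself gives no proof, only the citation. The only cosmetic difference is that the standard writeup stops at the moment when $|A|=|C|$ rather than when $|A|=\lceil \varepsilon n\rceil$, which sidesteps the rounding bookkeeping you mention, but that slack is indeed harmless here since the lemma is only applied with plenty of room in $\varepsilon$.
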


Finding almost spanning embeddings is much easier, in particular, for factors.
The following lemma helps to add further cycles if there is still a linear number of vertices left.
\begin{lem}
	\label{lem:cycle}
	Let $\ell \ge 3$ be an integer, $\varepsilon > 0$, and $p = \omega(n^{-(\ell-1)/\ell})$.
	Then a.a.s.~in the random graph $G(n,p)$ for any choice of disjoint sets of vertices $V_1,\dots,V_\ell$ of size at least $\varepsilon n/\ell$ the number of cycles $v_1,\dots,v_\ell$ with $v_i \in V_i$ for $1\le i \le \ell$ is at least $\tfrac{1}{2} \prod_{i=1}^k p \cdot |V_i|$.
\end{lem}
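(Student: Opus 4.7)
I would prove the lemma by a Janson-plus-union-bound argument. For a fixed choice of pairwise disjoint sets $V_1,\dots,V_\ell\subseteq[n]$ with $|V_i|\ge\varepsilon n/\ell$ and for each tuple $\mathbf v=(v_1,\dots,v_\ell)\in V_1\times\cdots\times V_\ell$, let $A_{\mathbf v}$ be the (increasing) event that all $\ell$ edges $v_iv_{i+1\bmod\ell}$ lie in $G(n,p)$; then $X=\sum_{\mathbf v}\mathds 1_{A_{\mathbf v}}$ counts the cycles of interest and has expectation $\mu=\prod_{i=1}^{\ell}|V_i|\cdot p^\ell\ge(\varepsilon/\ell)^\ell(np)^\ell$. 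Janson's inequality then yields $\PP[X\le\mu/2]\le\exp(-c\mu^2/(\mu+\Delta))$, with $\Delta$ the standard sum over pairs of dependent events.

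The key quantitative step is to bound $\mu^2/(\mu+\Delta)$ from below by $\omega(n)$ under the hypothesis $p=\omega(n^{-(\ell-1)/\ell})$. Grouping pairs $(\mathbf v,\mathbf v')$ by the isomorphism type of their overlap $H\subseteq C_\ell$ (a nonempty disjoint union of paths, or all of $C_\ell$), one obtains $\mu^2/\Delta\ge c_\ell\min_H n^{v(H)}p^{e(H)}$. The two extreme candidates $H=K_2$ and $H=C_\ell$ give respectively $n^2p=\omega(n^{1+1/\ell})$ and $(np)^\ell=\omega(n)$, and every intermediate $H$ dominates one of these up to constants; likewise $\mu=\omega(n)$. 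Hence $\PP[X\le\mu/2]\le\exp(-\omega(n))$ for any single choice of the sets $V_i$.

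Finally, since there are at most $(\ell+1)^n=e^{O(n)}$ ways to split $[n]$ into $\ell$ disjoint pieces (plus a remainder), a union bound over all such splits shows that the probability of failure for some choice is at most $e^{O(n)}\cdot e^{-\omega(n)}=o(1)$, as required. The only mildly technical point---and the main obstacle---is the routine verification that no nontrivial subgraph $H\subseteq C_\ell$ is a bottleneck, i.e.\ that $n^{v(H)}p^{e(H)}=\omega(n)$ uniformly in $H$; this reduces to checking $n(np)^k=\omega(n)$ for $1\le k\le\ell-1$ (paths $P_{k+1}$) and $(np)^\ell=\omega(n)$ (the cycle itself), both immediate from $np=\omega(n^{1/\ell})$.
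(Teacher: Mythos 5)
Your proposal is correct and is essentially the argument the paper has in mind: the paper proves Lemma~\ref{lem:cycle} by citing it as a special case of~\cite[Theorem~4.9]{janson2011random}, i.e.\ the standard lower-tail application of Janson's inequality to partite cycle counts, combined with a union bound over the at most $(\ell+1)^n$ choices of the sets $V_1,\dots,V_\ell$. Your verification that $\mu^2/(\mu+\Delta)=\omega(n)$ via the overlap types $H\subseteq C_\ell$ is exactly the routine computation hidden in that citation, so there is nothing to add.
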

This is a special case of~\cite[Theorem~4.9]{janson2011random} and the proof is a standard application of Janson's inequality (c.f.~the version in~\cite[Theorem 21.12]{KF_Random}), which proved to be a very useful tool for embedding small graphs.

\section{Overview of the proof from Theorem~\ref{thm:girth_2-universal} and~\ref{thm:large_2-universal}}
\label{sec:overview}

The general approach is similar to~\cite{BHKMPPtrees}, where we are proving universality for bounded degree spanning trees.
Here we want to embed every $F \in \cF^\ell(n,2)$ into $H=G_\alpha \cup G(n,p)$.
For proving this universality statement it is helpful to extract a deterministic graph $G$ from $G(n,p)$, which satisfies the crucial conditions for our embedding.

Let $G$ be a graph on $n$ vertices and $T$ a graph on vertices $v_1,\dots,v_t$.
We denote by $t_T(G)$ the number of copies of $T$ in $G$.
Furthermore, for vertex sets $V_1, \dots, V_k \subseteq V(G)$ we denote by $t_T(V_1,\dots,V_k)$ the number of copies of $T$ in $G$ with $v_i \in V_i$.
If there is no labelling of the vertices of $T$ specified, then we fix an arbitrary one.

\begin{defn}
\label{def:graphs}
For $\ell_0>0$, $\ell \ge 3$ integers and $p \in (0,1)$ we say that an $n$-vertex graph $G$ is an \emph{$(n, p, \ell_0, \ell)$-graph} if the following holds:
	\begin{enumerate}[label={(\bfseries A\arabic*)}]
	\item \label{def:edges} For any disjoint $V_1, V_2 \subset V(G)$ such that $|V_1|, |V_2|\ge n / \ell_0$, we have $t_{K_2}(V_1, V_2)\ge (p/2) |V_1| |V_2|$.
	\item \label{def:cherries} For any pairwise disjoint $V_1, V_2, V_3 \subset V(G)$ such that $|V_1|, |V_2|, |V_3| \ge n/\ell_0$, we have $t_{K_{1,2}}(V_1, V_2,V_3) \ge (p^2/4) |V_1| |V_2| |V_3|$.
	Furthermore, $t_{K_{1,2}}(G) \le p^2 n^3$.
	\item \label{def:cycles} For all $\ell \le k < \ell_0$ and for any pairwise disjoint sets $V_1,\dots,V_k \subseteq V(G)$ such that $|V_i| \ge n /\ell_0^2$ for $1 \le i \le k$, we have $t_{C_k}(V_1,\dots,V_k) \ge \tfrac{1}{2} p^k \prod_{i=1}^k |V_i|$.
	Furthermore, for $k=3,4$ we also have $t_{C_k}(G) \le p^k n^k$.
\end{enumerate}
\end{defn}

We denote the family of $(n, p, \ell_0, \ell)$-graphs by $\cG(n, p, \ell_0, \ell)$.
Basically, the elements from $\cG(n, p, \ell_0, \ell)$ are graphs where we can control the number of edges, cherries, and cycles (of size $\ell \le k <\ell_0$) between sets of linear size.
To cover Theorem~\ref{thm:girth_2-universal} and~\ref{thm:large_2-universal} simultaneously we define
\begin{align*}
p_{\ell_0,\ell}(n):=
\begin{cases}
	n^{-(\ell-1)/\ell} \text{ for }\ell < \ell_0,\\
	n^{-1} \text{ for } \ell \ge \ell_0.
\end{cases}
\end{align*}
The following two statements will readily imply both theorems.

\begin{prop}\label{prop:expander}
	Let $\ell_0, \ell \ge 3$ be integers and $p = \omega(p_{\ell_0,\ell}(n))$.
	Then the random graph $G(n,p)$ a.a.s.~is in $\cG(n,p,\ell_0, \ell)$.
\end{prop}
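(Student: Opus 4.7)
Each of the three conditions \ref{def:edges}, \ref{def:cherries}, \ref{def:cycles} is an a.a.s.\ statement, so by a final union bound it suffices to handle them separately. The common recipe is: for fixed admissible sets apply a sharp concentration inequality to the relevant subgraph count, then union bound over the at most $2^{O(n)}$ choices of $(V_1,\dots,V_k)$ (plus a $\mathrm{poly}(n)$ factor for the sizes).

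\emph{Condition \ref{def:edges}.} For fixed $V_1, V_2$ of size $\ge n/\ell_0$, $t_{K_2}(V_1,V_2)$ is a sum of at least $n^2/\ell_0^2$ independent $\mathrm{Bernoulli}(p)$ variables. A Chernoff bound yields $\PP[t_{K_2}(V_1,V_2)<\tfrac{p}{2}|V_1||V_2|]\le\exp(-\Omega(pn^2/\ell_0^2))$, and since $p\gg 1/n$, this exponent dominates the $2^{2n}$ union bound.

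\emph{Condition \ref{def:cherries}.} For the lower bound apply Janson's inequality: for fixed admissible $V_1,V_2,V_3$ and $X = t_{K_{1,2}}(V_1,V_2,V_3)$, one has mean $\mu=p^2|V_1||V_2||V_3|\ge p^2 n^3/\ell_0^3$ and dependency sum $\Delta=O(p^3 n^4)$ (dominated by cherry-pairs sharing one edge). Janson then gives failure probability $\exp(-\Omega(\mu^2/\Delta))=\exp(-\Omega(pn^2))$, again beating $2^{3n}$. For the upper bound $t_{K_{1,2}}(G)\le p^2 n^3$, note $\EE t_{K_{1,2}}(G)=n(n-1)(n-2)p^2$; concentration of subgraph counts (Chebyshev, or Chernoff applied to each degree via the identity $t_{K_{1,2}}(G)=\sum_v d(v)(d(v)-1)$) in the regime $pn\to\infty$ gives $t_{K_{1,2}}(G)=(1+o(1))\EE t_{K_{1,2}}(G)\le p^2 n^3$ a.a.s.

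\emph{Condition \ref{def:cycles}.} Fix $k$ with $\ell\le k<\ell_0$. For admissible $V_1,\dots,V_k$, let $X=t_{C_k}(V_1,\dots,V_k)$, with mean $\mu=p^k\prod|V_i|\ge p^k(n/\ell_0^2)^k$. Enumerating pairs of $C_k$'s sharing at least one edge, the dominant contribution to the dependency sum comes from single-edge overlaps and gives $\Delta=O(\mu\cdot p^{k-1} n^{k-2})$. Hence Janson yields failure probability $\exp(-\Omega(\mu^2/\Delta))=\exp(-\Omega(pn^2/\ell_0^{2k}))$, which beats the $2^{kn}$ union bound since $\ell_0$ is a constant and $pn\to\infty$ (using $p\gg p_{\ell_0,\ell}(n)\ge 1/n$). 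The choice of exponent in $p_{\ell_0,\ell}(n)$ is precisely what makes $\mu\to\infty$ when $k<\ell_0$, which is where the $n^{-(\ell-1)/\ell}$ threshold enters. For the upper bound on $t_{C_k}(G)$ with $k\in\{3,4\}$, since $p\gg n^{-(\ell-1)/\ell}\ge 1/n=n^{-1/m(C_k)}$, standard concentration of strictly balanced subgraph counts (e.g.\ second moment) yields $t_{C_k}(G)=(1+o(1))\EE t_{C_k}(G)\le p^k n^k$ a.a.s.

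\emph{Main obstacle.} The delicate point is bookkeeping the dependency sum $\Delta$ in Janson's inequality for \ref{def:cycles}: enumerating all shapes in which two copies of $C_k$ can overlap, confirming that the single-edge-overlap term dominates each, and verifying that the resulting ratio $\mu^2/\Delta$ grows uniformly faster than $kn$ for every $\ell\le k<\ell_0$. This is not conceptually deep but requires care, and the analogous calculation for $C_3$ is at the core of the Ferber--Kronenberg--Luh result~\cite{ferber2016optimal}.
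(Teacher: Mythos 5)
Your proposal is sound and reaches the same conclusion, but it takes a partly different route from the paper. For \ref{def:edges} you do exactly what the paper does (Chernoff plus a $2^{2n}$ union bound). For the lower bound in \ref{def:cherries}, however, the paper does not run a separate probabilistic argument at all: it first proves the two-sided edge estimate for all linear-sized pairs and then deduces the cherry count deterministically, by an averaging/convexity step applied to the degrees $\deg_{V_2}(v),\deg_{V_3}(v)$ for $v\in V_1$, so no union bound over triples is needed; your direct Janson computation with $\Delta=O(p^3n^4)$ and exponent $\Omega(pn^2)$ is a valid and self-contained alternative (and arguably more robust, since it does not rely on any correlation between the two degree sequences), at the cost of the dependency-sum bookkeeping. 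For \ref{def:cycles} the paper simply invokes Lemma~\ref{lem:cycle} (a packaged Janson statement from Janson--\L uczak--Ruci\'nski) for the lower bound, which is essentially your hand computation; note that besides the single-edge-overlap term you also need the diagonal term $\mu\ge p^k(n/\ell_0^2)^k$ to be $\omega(n)$, not merely $\to\infty$, which is exactly where $k\ge\ell$ and $p=\omega(n^{-(\ell-1)/\ell})$ give $\mu=\omega(n^{k/\ell})$; your remark about the threshold entering here is the right point, just state it at strength $\omega(n)$. The upper bounds via second moment match the paper. One small correction: the number of (unlabelled) cherries in $K_n$ is $n\binom{n-1}{2}\sim n^3/2$, not $n(n-1)(n-2)$; with your ordered count the claimed conclusion $t_{K_{1,2}}(G)=(1+o(1))\EE\, t_{K_{1,2}}(G)\le p^2n^3$ would fail by a $(1+o(1))$ factor, whereas with the correct unordered count (and likewise $\binom{n}{3}$ and $\sim n^4/8$ for $C_3$, $C_4$) there is a constant factor of slack and the bound holds.
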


Note that $p = \omega(p_{\ell_0,\ell}) = \omega(n^{-1})$ for all $\ell,\ell_0$, which is sufficient for Condition~\ref{def:edges} and~\ref{def:cherries}, and that for $\ell \ge \ell_0$ Condition~\ref{def:cycles} is obsolete for the graphs in $\cG(n,p,\ell_0,\ell)$ and .

\begin{thm}
	\label{thm1}
	For any $\alpha>0$ there exist an $\ell_0>0$ and $n_0$ such that the following holds for any $\ell \ge 3$ and $n\ge n_0$.
	Suppose $p = \omega(p_{\ell_0,\ell}(n))$, $G \in \cG(n, p, \ell_0, \ell)$, and $G_\alpha$ an $n$-vertex on the same vertex set as $G$ with $\delta(G_\alpha) \ge \alpha n$.
	Then $H:=G_\alpha \cup G$ is $\cF^\ell(n, \Delta)$-universal.
\end{thm}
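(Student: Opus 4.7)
The plan is to fix an arbitrary $F \in \cF^\ell(n, \Delta)$ and construct an embedding $\varphi \colon V(F) \hookrightarrow V(H)$ in three phases mirroring the proofsketch: (i)~embed a small linear-sized subgraph $F' \subseteq F$ into $G$ while maintaining, for every vertex $v \in V(H)$, a linear ``swap set'' $B(v)$; (ii)~extend $\varphi$ almost to spanning; (iii)~absorb the leftover vertices via a switching argument. I work in the deterministic setting with $G \in \cG(n,p,\ell_0,\ell)$, $\delta(G_\alpha) \ge \alpha n$, and $\ell_0$ chosen sufficiently large in terms of $\alpha$.

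For Step~1, I apply Lemma~\ref{lem:2-partition} to split $F$ into a small subgraph $F'$ of size $\beta n$ (with $\beta = \beta(\alpha, \ell_0)$) and a remainder whose components are ``reusable''---long paths, long cycles, and short cycles of length $\ell \le k < \ell_0$. Lemma~\ref{lem:embeddingF_1} then delivers an embedding of $F'$ into $G$ together with, for every $v \in V(H)$, a reservoir $B(v) \subseteq V(G) \setminus \varphi(V(F'))$ of linear size satisfying two properties: (a) replacing $\varphi(b)$ by $v$ for any $b \in B(v)$ still yields a valid embedding of $F'$; and (b) every $u \in V(H)$ has linearly many $G$-neighbours in $B(v)$. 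Conditions~\ref{def:edges}--\ref{def:cycles} are the only features of $G$ used here, and the girth parameter $\ell$ restricts which cycle lengths may appear as components of $F'$.

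In Step~2 I extend $\varphi$ to cover all but $\varepsilon n$ vertices of $F$ while leaving each reservoir $B(v)$ intact. Long path and long cycle components are embedded via Lemma~\ref{lem:longpath} inside the subgraph of $H$ spanned by non-reservoir vertices, whose hypothesis is met because $\delta(G_\alpha) \ge \alpha n$ already guarantees an edge between any two linear-sized subsets. Short cycle components of length $k$ with $\ell \le k < \ell_0$ are assembled into a $C_k$-factor using Property~\ref{def:cycles}, equivalently Lemma~\ref{lem:cycle}. In Step~3, the $\le \varepsilon n$ still-uncovered vertices of $H$ are absorbed using the switching technique of~\cite{BMPPMinDegree}: for each uncovered $v$ I pick $b \in B(v)$ and an unembedded $x \in V(F)$, set $\varphi(x) := b$, and use the swap property to reroute the image of the old occupant of $b$ onto $v$. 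Because $|B(v)|$ is linear, the switching has enough flexibility to be performed consistently across all leftover vertices.

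The main obstacle is Step~1: the reservoir construction must simultaneously satisfy competing demands---$F'$ must be small enough that Step~2 is feasible, the reservoirs $B(v)$ must be large for \emph{every} $v \in V(H)$ at once, and the swap property must hold while respecting the girth constraint. The girth-$\ell$ condition is precisely what allows us to avoid short forbidden cycles, and it is the reason $p_{\ell_0,\ell}$ improves from $n^{-(\ell-1)/\ell}$ to $n^{-1}$ once $\ell \ge \ell_0$. This balancing act is packaged into Lemma~\ref{lem:embeddingF_1}, whose proof (deferred to Section~\ref{sec:auxiliary}) is the real combinatorial work and uses~\ref{def:cherries} and~\ref{def:cycles} to count swap candidates against forbidden configurations.
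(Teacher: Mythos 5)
Your high-level plan is the paper's, and you defer to the same lemmas, but three concrete points in your write-up are wrong or missing. First, in Step~2 you justify the hypothesis of Lemma~\ref{lem:longpath} by asserting that $\delta(G_\alpha)\ge\alpha n$ ``already guarantees an edge between any two linear-sized subsets''. That is false: take $G_\alpha=K_{\alpha n,(1-\alpha)n}$ and two disjoint linear subsets of the larger side, or a disjoint union of cliques of order $\alpha n$. The paper gets this bipartite-expansion property from the pseudorandom part, namely Property~\ref{def:edges} of $G$, applied to $G'=G\setminus V(\tilde F')$. Second, you say the remainder of the decomposition contains ``long cycles'' which are embedded in Step~2 via Lemma~\ref{lem:longpath}. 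That lemma only produces a long path, and there is no way to close a cycle on two prescribed endpoints using \ref{def:edges} alone. This is exactly why Lemma~\ref{lem:2-partition} removes two adjacent vertices from every cycle of length at least $\ell_0$, so that $F[W\setminus U]$ contains only paths and cycles of length less than $\ell_0$ (Property~\ref{prop:pathscycles}); the long cycles are only closed in Step~\ref{step3}, where the removed pairs (the isolated edges of $F[V\setminus W]$, Property~\ref{prop:K3K2}) are inserted by the switching argument. As written, your Step~2 fails precisely at this closing step.

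Third, your reservoir is described inconsistently and lacks the structure that makes the absorption work. You declare $B(v)\subseteq V(G)\setminus\varphi(V(F'))$, i.e.\ a set of uncovered vertices, yet your property (a) and your Step~3 treat its elements as covered vertices with an ``old occupant''; moreover, keeping linear-sized sets of uncovered vertices ``intact'' through Step~2 is incompatible with ending with a spanning embedding of an $n$-vertex $F$. In the paper, $B_{\tilde F,H}(u,v)=B_{\tilde F,H}(v)\cap N_H(u)$ is a set of \emph{already covered} vertices $w$ all of whose $\tilde F$-neighbours lie in $N_H(v)$, and the intersection with $N_H(u)$ is essential: after the occupant of $w$ is moved to $v$, the freed vertex $w$ must be adjacent to the image $\tilde u$ of the already-embedded $F$-neighbour of the new vertex. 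Finally, ``enough flexibility because $|B(v)|$ is linear'' is not an argument for Step~3: the leftover vertices arrive in adjacent pairs (and, for $\ell=3$, triangles), so the vacated positions $\tilde w_1,\tilde w_2$ (resp.\ $\tilde w_1,\tilde w_2,\tilde w_3$) must span an edge (resp.\ a triangle) of $G$ inside the relevant reservoir sets, which the paper extracts via \ref{def:edges} and \ref{def:cycles}; one must also track that each switch shrinks every reservoir by only a constant, which is why the initial bound $|B_{\tilde F',H}(u,v)|\ge 10\varepsilon n$ against at most $\varepsilon n/2+1$ switches is needed. Without these ingredients your Step~3 is an assertion, not a proof.
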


\begin{proof}[Proof of Theorem~\ref{thm:girth_2-universal} and~\ref{thm:large_2-universal}]
	Given $\alpha$, let $\ell_0$ and $n_0$ be given by Theorem~\ref{thm1}.
	Let $\ell \ge 3$ for Theorem~\ref{thm:girth_2-universal} and $\ell \ge \ell_0$ for Theorem~\ref{thm:large_2-universal}.
	We apply Proposition~\ref{prop:expander} with this $\ell$, $\ell_0$, and $p$.
	Since a.a.s.~the random graph $G(n,p)$ is in $\cG(n,p,\ell_0,\ell)$ we have, by Theorem~\ref{thm1}, that $G_\alpha\cup G(n,p)$ is  a.a.s.~$\cF^\ell(n,2)$-universal as desired.
\end{proof}

Since in Theorem~\ref{thm1} $H$ is a deterministic graph we can fix some $F \in \cF^\ell(n,2)$ and it remains to show that there is an embedding of $F$ into $H$.
Given $\alpha>0$ we will work with constants $\beta, \varepsilon >0$ and an integer $\ell_0$ such that
	\begin{align}
		\label{eq:parameters}
		20 \beta \le \alpha, \qquad \varepsilon \le 10^{-4} \alpha^3 \beta/2, \quad\text{and}\quad \ell_0 \ge 10/\varepsilon,
	\end{align}
	
For Theorem~\ref{thm1} the size of cycles, which we can embed directly into $G \in \cG(n,p,\ell_0,\ell)$, is bounded by $\ell_0$.
We decompose the vertex set of $F$ into three parts.
It is convenient to work with a collection of cycles.
For this let $\cF^\ell_*(n,2)$ be the subset of the edgewise maximal graphs from $\cF^\ell(n,2)$.
Then $F \in \cF^\ell_*(n,2)$ consists of the disjoint union of cycles and possibly one path of length $k$ with $0 \le k \le \ell-2$.

\begin{lem}
	\label{lem:2-partition}
	Let $\alpha>0$, $\ell \ge 3$ be an integer, $F \in \cF^\ell_*(n,2)$, and $\beta$, $\varepsilon$, and $\ell_0$ be such that~\eqref{eq:parameters} holds.
	Then there exist sets of vertices $U$ and $W$ with $U \subseteq W \subseteq V:=V(F)$ such that $ |U| = 10 \beta n \pm 2$, $ |V \setminus W| = \varepsilon n \pm 2$ and the following holds for the induced subgraphs:
	\begin{enumerate}[label={(\bfseries P\arabic*)}]
		\item \label{prop:pathscycles} $F[W \setminus U]$ only contains cycles of length less than $\ell_0$ and arbitrarily long paths .
		\item \label{prop:K3K2} $F[V\setminus W]$ only contains isolated edges and $C_3$'s.
		\item \label{prop:neighbours} In $F$ there are no edges between $U$ and $W \setminus U$ and at most two edges between $U$ and $V \setminus W$.
		
	\end{enumerate}
\end{lem}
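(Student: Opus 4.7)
Since $F \in \cF^\ell_*(n,2)$ is edge-maximal of maximum degree~$2$ and girth at least~$\ell$, I can write $F$ as a disjoint union of cycles $C_1, \dots, C_r$ of lengths at least~$\ell$, together with at most one path $P$ of length at most $\ell-2$. The path $P$, if present, contributes at most $\ell-1 \ll \varepsilon n$ vertices, and I will simply place it entirely in $W \setminus U$, which is permitted by~\ref{prop:pathscycles}.

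My core strategy is to let $U$ consist of some whole cycles of $F$ together with at most one arc of one additional cycle, and to assemble $V \setminus W$ by removing pairs of consecutive vertices (which become $K_2$'s) from cycles of $F$, each such removal leaving behind a path-arc that will lie in $W \setminus U$. This respects~\ref{prop:neighbours}: the whole cycles placed in $U$ have no $F$-neighbours outside themselves, and the at-most-one arc in $U$ is flanked on either side by a removed $K_2$ in $V \setminus W$, giving exactly the allowed two edges between $U$ and $V \setminus W$. It also enforces~\ref{prop:K3K2}, since every vertex of $V \setminus W$ lies either in a planted $K_2$, or (for parity corrections when $\ell = 3$) in a whole triangle placed there as a $C_3$. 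Crucially, every cycle of length at least $\ell_0$ that does not become the $U$-arc must be cut at least once, because~\ref{prop:pathscycles} forbids long cycles in $W \setminus U$.

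I split into two cases for the $U$-arc. If $F$ has a cycle $C^*$ of length at least $10\beta n + 4$, then $C^*$ alone supplies the arc: take $10\beta n$ consecutive vertices of $C^*$ as $U$, take the two adjacent pairs as $K_2$'s in $V \setminus W$, and send the complementary arc (a long path) to $W \setminus U$; I then add further $K_2$-cuts to this long path, or to other long cycles, until $|V \setminus W| = \varepsilon n \pm 2$. If every cycle of $F$ has length less than $10\beta n + 4$, I greedily pack whole cycles of $F$ into $U$ and then cut one further cycle to contribute a precisely tuned arc bringing $|U|$ to $10\beta n \pm 2$. The feasibility of the tuning within the $\pm 2$ tolerance is an arithmetic observation: writing $M$ for the maximum cycle length of $F$, the set of integers representable as (sum of chosen whole cycle lengths) plus (arc length in $[0, M-4]$) leaves at most three consecutive integers missing between successive multiples of $M$, so every window of five consecutive integers contains a representable value.

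All remaining vertices of $F$ go to $W \setminus U$: the short cycles not placed in $U$, the path-arcs produced by cuts, and the path $P$. Each component of $F[W \setminus U]$ is then a path or a cycle of length less than $\ell_0$, verifying~\ref{prop:pathscycles}. The main obstacle is precisely the simultaneous $\pm 2$ size control of $|U|$ and $|V \setminus W|$ under the rigid structural constraints; this is what forces the case split above together with the arithmetic check that one additional arc cut always suffices. The remaining conclusions then follow by inspection of the construction.
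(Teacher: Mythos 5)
Your construction is essentially the paper's: decompose $F$ into cycles (plus the one short path), build $U$ from whole cycles together with one arc of a final cycle, use the removed consecutive pairs as $K_2$'s in $V\setminus W$ to buffer $U$ from $W\setminus U$, cut every cycle of length at least $\ell_0$ once, and pad $V\setminus W$ with further $K_2$'s (and whole triangles when $\ell=3$) up to $\varepsilon n\pm 2$; the paper does this with a single greedy pass (add cycles until overshooting $10\beta n$, then trim the last cycle), so your big-cycle/small-cycle case split and arithmetic tuning are just a more laborious route to the same bookkeeping. One imprecision to fix: the admissible trims of a cycle are removing exactly $2$ or at least $5$ consecutive vertices, so arc lengths $m-1$, $m-3$, $m-4$ are \emph{not} available (removing $3$ or $4$ consecutive vertices forces either an induced $P_3$/$P_4$ or an isolated vertex in $F[V\setminus W]$, violating~\ref{prop:K3K2}, or an edge from $U$ to $W\setminus U$, violating~\ref{prop:neighbours}); in particular your stated arc-length range $[0,M-4]$ and your Case~1 threshold $10\beta n+4$ (whose leftover is an induced $P_4$) are off, though the $\pm 2$ tolerance absorbs the correction immediately.
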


While Property~\ref{prop:pathscycles} is essential for the embedding, Property~\ref{prop:K3K2} and~\ref{prop:neighbours} mostly are there to simplify the proof.
To prove this lemma we greedily partition cycles (of length at least $\ell_0$) into paths and put aside some edges.

For the proof of Theorem~\ref{thm1} we first embed $F[U]$ into $G$ with an additional reservoir property.
We now define these reservoir sets and show that we can force them to be suitably large.  
These sets will be helpful when finishing the embedding of~$F$, since they will allow us to locally alter our partial embeddings.

Let $V$ be a set of $n$ vertices.
Let $H$ be a graph on $V$ and let $F$ be a graph with $V(F)\subset V$.
For $v\in V$, let 
\begin{align*}
	B_{F,H}(v):=\big\{w\in V(F): N_F(w)\subseteq N_{H}(v)\big\}.
\end{align*}
For distinct vertices~$u$ and~$v\in V$, we define their \emph{reservoir set}~$B_{F,H}(u, v)$ as follows:
\begin{align*}
	B_{F,H}(u, v):= B_{F,H}(v)\cap N_{H}(u).
\end{align*}
Recall that the idea is that we can free up any $w\in B_{F,H}(u,v)$ used already in the embedding, by moving the vertex embedded to~$w$ to~$v$.
This then allows us to use~$w$ for embedding any remaining neighbours of the vertex embedded to~$u$.
The following lemma gives us the desired embedding of $F[U]$, which will be Step~\ref{step1} below.

\begin{lem}
	\label{lem:embeddingF_1}
	For $\alpha>0$, $\ell \ge 3$ an integer, and $\varepsilon$, $\beta$, and $\ell_0$ such that~\eqref{eq:parameters} holds, there exist $n_0$ such that the following is true for $n \ge n_0$.
	Suppose $p = \omega(p_{\ell_0,\ell}(n))$, $G \in \cG(n,p,\ell_0, \ell)$, and $G_\alpha$ an $n$-vertex graph on the same vertex as $G$ with $\delta(G) \ge \alpha n$.
	Then for any $F \in \cF^\ell_*(10 \beta n \pm 2,2)$ there exists an embedding $f$ of $F$ into $H=G \cup G_\alpha$ such that $|B_{\tilde{F},H}(u,v)| \ge 10 \varepsilon n$ for any $u,v \in V(G)$, where $\tilde{F}=f(F)$.
\end{lem}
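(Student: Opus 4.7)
The plan is to build the embedding componentwise via a randomized greedy procedure and then verify the reservoir property by McDiarmid-style concentration followed by a union bound over pairs $(u,v)\in V(G)^2$. The key input is the pseudorandomness of $G$ (properties~\ref{def:edges}--\ref{def:cycles}), which guarantees many valid choices at each step so the procedure is ``sufficiently random''.

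First I would decompose $F$: keep its cycles of length in $[\ell,\ell_0)$ as is, cut each cycle of length $\ge\ell_0$ into arcs of length in $[\ell_0/2,\ell_0)$, and keep the optional short path of $F$. Then embed the resulting components one at a time into $G$, avoiding used vertices. For a short cycle of length $k$, property~\ref{def:cycles} produces at least $\tfrac12 p^k\prod_{i=1}^k|V_i|$ copies on any split of the unused vertex set into $k$ linear-sized pieces, so one may sample an embedding uniformly from a large family. For a path, extend one vertex at a time, using property~\ref{def:edges} to choose each successor uniformly at random from a linear-sized common neighbourhood within the unused set. Because only $|V(F)|\le 10\beta n+O(1)$ vertices are ever used and $\ell_0$ is a constant depending only on $\alpha$, the remaining vertex set stays dense enough throughout for each step to succeed.

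To establish the reservoir bound, fix $u,v\in V(G)$. For a vertex $w\in V(F)$ the randomized embedding places $\pi(w)$ into $N_H(u)$ with probability at least roughly $\alpha$, and, conditional on this, each of the $\le 2$ neighbours of $w$ is embedded into $N_H(v)$ with probability at least roughly $\alpha$ each, so $w$ contributes to $B_{\tilde F,H}(u,v)$ with probability about $\alpha^3$. Summing over $w$ gives $\EE|B_{\tilde F,H}(u,v)|\ge\tfrac12\alpha^3\cdot 10\beta n$, which comfortably exceeds $10\varepsilon n$ thanks to~\eqref{eq:parameters}. Changing the random choice for a single component perturbs at most $O(\ell_0)$ of the indicators, so McDiarmid's bounded-differences inequality applied to the natural filtration of the greedy process yields concentration on the scale $O(\ell_0\sqrt n)=o(\alpha^3\beta n)$; a union bound over the at most $n^2$ pairs $(u,v)$ then gives the desired embedding with positive probability.

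The main obstacle I anticipate is making the greedy-random process genuinely ``sufficiently random''. One has to show that at every step the set of available embeddings is not only large but also quantitatively close to uniform with respect to the indicators $\mathbf{1}\{\pi(w)\in N_H(u)\}$ and $\mathbf{1}\{\pi(w)\in N_H(v)\}$, so that the conditional probabilities above hold throughout; this requires checking that $N_H(u)$ and $N_H(v)$ retain their densities in the unused set even after many components have been placed, which can be read off from \ref{def:edges}--\ref{def:cycles} as long as the unused set stays of size $(1-O(\beta))n$. The more constrained setting of long paths, where a linear number of consecutive vertices are each chosen greedily from a common neighbourhood of a predecessor, is where this control is most delicate and is likely to require a small deterministic cleanup before the concentration step.
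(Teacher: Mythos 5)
There is a genuine gap, and it sits exactly at the point you flag as ``the main obstacle'': the claim that at each greedy step the newly embedded vertex lands in $N_H(u)$ (and its neighbours in $N_H(v)$) with conditional probability roughly $\alpha$. In your process the successor of a path vertex must be chosen from the neighbourhood (in $H$) of the predecessor's image, and nothing forces that neighbourhood to meet $N_H(u)$ in a linear set. Take $G_\alpha=K_{\alpha n,(1-\alpha)n}$: if the predecessor's image lies in the small side and $u$ lies in the large side, then $N_{G_\alpha}(\mathrm{pred})\cap N_{G_\alpha}(u)=\emptyset$, and the only help from the random graph is $N_G(u)$, of size $O(pn)=o(n)$; so the conditional probability is $o(1)$, not $\Omega(\alpha)$, and your expectation bound $\EE|B_{\tilde F,H}(u,v)|\ge \tfrac12\alpha^3\cdot 10\beta n$ collapses for a positive fraction of pairs $(u,v)$. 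Concentration cannot repair a wrong mean, so the McDiarmid-plus-union-bound step does not go through. A related technical error feeds into this: \ref{def:edges} controls edges between two \emph{linear-sized sets}, not the degree of a single vertex, so it does not provide the ``linear-sized common neighbourhood'' from which you propose to sample successors inside $G$; individual $G$-degrees are only $\Theta(pn)=o(n)$. Finally, after cutting long cycles into arcs you never reconnect them, so the map you build is not an embedding of $F$; some connecting device (the paper closes gaps by length-$3$ paths through two $G_\alpha$-neighbourhoods plus one $G$-edge via \ref{def:edges}) is needed.

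The paper avoids the probability problem by not asking the bulk embedding to be random at all. It selects $t=\beta n\pm1$ vertices of degree $2$ in $F$ at pairwise distance at least $5$ and embeds each together with both of its $F$-neighbours onto a small centred template (a cherry, or a $C_3$/$C_4$ when $\ell\le 4$) chosen \emph{uniformly at random among all copies of that template in $G$ alone}, disjointly from the previous ones. For a fixed pair $(u,v)$, the lower bounds in \ref{def:cherries} and \ref{def:cycles} count copies whose centre lies in a linear subset of $N_{G_\alpha}(u)$ and whose leaves lie in linear subsets of $N_{G_\alpha}(v)$, while the matching upper bounds $t_{K_{1,2}}(G)\le p^2n^3$, $t_{C_3}(G)\le p^3n^3$, $t_{C_4}(G)\le p^4n^4$ bound the denominator; the ratio gives a conditional success probability at least a constant (e.g.\ $\alpha^3/256$) \emph{independently of the structure of $G_\alpha$}, which is precisely what your scheme lacks. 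Chernoff-type concentration for sequentially dependent indicators and a union bound over the $n^2$ pairs then fix one good family $T_1,\dots,T_t$. The remainder of $F$ is completed deterministically (short cycles via \ref{def:cycles}, extensions via $\delta(G_\alpha)\ge\alpha n$, and length-$3$ connections via \ref{def:edges}), and the distance-$5$ separation guarantees these steps never add new $\tilde F$-neighbours to the template centres, so the centres really lie in $B_{\tilde F,H}(u,v)$. If you want to rescue your approach, you would have to restrict the ``contributing'' vertices to ones embedded by such unconstrained-in-$G$ random choices, which essentially reproduces the paper's argument.
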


For the proof we first find a family of roughly $\beta n$ many cherries ($K_{1,2}$'s) in $F$.
Next we embed them uniformly at random onto cherries in $G$, which ensures the reservoir property in $H$.
Then we finish the embedding by connecting the cherries to cycles in $H$.

For Step~\ref{step2} we then find an embedding of $F[W \setminus U]$, so that together we have an embedding containing all but roughly $\varepsilon n$ vertices of $F$.
This is easy, because by~Property~\ref{prop:pathscycles} we only have to embed small cycles using Property~\ref{def:cycles} of $G$, and some longer paths using Property \ref{def:edges} of $G$ and Lemma~\ref{lem:longpath}.

Finally, we can finish the embedding of $F$ in Step~\ref{step3} by using our switching technique from~\cite{BMPPMinDegree}.
For example, suppose that $u \in V(F)$ is already embedded onto $\tilde u$ in the current embedding $\tilde F$ and we want to embed a neighbour $w$ of $u$ in $F$.
We choose any uncovered vertex $\tilde v$ in $H$ and any vertex $\tilde w$ in $B_{\tilde F,H}(\tilde u, \tilde v)$.
Let $v$ be the preimage of $\tilde w$.
In the current embedding $\tilde F$ we replace $\tilde w$ by $\tilde v$ and embed $w$ onto $\tilde w$.
This gives us again a valid embedding, because by definition of $B_{\tilde F,H}(\tilde u, \tilde v)$ the pair $\tilde w \tilde u$ is an edge in $H$ and the neighbours of $\tilde w$ in $\tilde F$ are also neighbours of $\tilde v$ in $H$.
In view of Property~\ref{prop:K3K2} we will use this for the connection of two vertices with a path of length $3$ and for embedding isolated triangles.
We now give the details of the proof.

\section{Proof of Theorem~\ref{thm1}}
\label{sec:proof}
%\begin{proof}[Proof of Theorem~\ref{thm1}]
	
	Given $\alpha>0$ we choose constants $\beta, \varepsilon>0$ and an integer $\ell_0$ such that~\eqref{eq:parameters} holds.
	Let $n_0$ be large enough, suppose that $n\ge n_0$,  $G\in \cG(n, p, \ell_0 ,\ell)$, and that $G_\alpha$ is an $n$-vertex graph on $V(G)$ with $\delta(G_\alpha) \ge \alpha n$, and let $F\in \cF^\ell(n,\Delta)$.
	We want to find an embedding of $F$ into $H:=G \cup G_\alpha$.
	We add edges to $F$ while not creating a cycle of length less than $\ell$.
	Then $F \in \cF^\ell_*(n,2)$ and there is at most one path of length $k$ in $F$ with $0 \le k \le \ell-2$.
	
	\newcounter{steps}
	\medskip
	\noindent
	\textbf{ Step \refstepcounter{steps}\thesteps\label{step1}.}
	We apply Lemma~\ref{lem:2-partition} to obtain vertex sets $U \subseteq W \subseteq V=V(F)$ of size $|U| = 20 \beta n \pm 2$ and $|V \setminus W| = \varepsilon n \pm 2$ such that~\ref{prop:pathscycles},~\ref{prop:K3K2}, and~\ref{prop:neighbours} hold.
	By~\ref{prop:neighbours} there are at most two vertices in $F':=F[U]$ which have degree one.
	To get $F' \in \cF^\ell_*(20 \beta n \pm 2,2)$ we add one edge connecting these two vertices if they are at distance at least $\ell-1$.
	From Lemma~\ref{lem:embeddingF_1} we then obtain an embedding $f'$ of $F'$ into $H$ with the described property.
	After removing the edge from $F'$ that we added previously and defining $\tilde{F'} := f'(F')$ we still have $|B_{\tilde F',H}(u,v)| \ge 10 \varepsilon n$ for all $u,v \in V(H)$.
	
	\medskip
	\noindent
	\textbf{ Step \refstepcounter{steps}\thesteps\label{step2}.}
	Let $F'' := F[W \setminus U]$ and $G':= G \setminus V(\tilde F')$.
	Note that by~\ref{prop:neighbours} there are no edges between $F'$ and $F''$.
	We want to find an embedding $f''$ of $F''$ into $G'$ and start with the empty map $f''$.
	By Property~\ref{prop:pathscycles} the graph $F''$ only contains cycles of length less than $\ell_0$ and paths of arbitrary length.
	Using~\ref{def:edges} and Lemma~\ref{lem:longpath} we can find a path of length $(1-\varepsilon) n$ in $G'$.
	We extend $f''$ by embedding all paths from $F_2$ to consecutive segments from this path.

	Furthermore, for any cycle $C_k$ in $F''$ we use $|V \setminus W| \ge \varepsilon n -2 \ge n/\ell_0$ to find pairwise disjoint $V_1,\dots,V_k \subset V \setminus W$ with $|V_i| \ge n/\l_0^2$ for $1\le i \le k$. Then using~\ref{def:cycles}, we embed $C_k$ into the uncovered vertices of $G'$.
	Let $f''$ be the resulting embedding of $F''$ into $G'$.
	We combine $f'$ and $f''$ to obtain an embedding $f_0$ of $F' \cup F'' = F[W]$ into $H$.
	Let $\tilde F_0 := f_0 (F_1 \cup F_2)$ and observe that $|B_{\tilde{F}_0,H}(u,v)| \ge 10 \varepsilon n$ for all $u,v \in V(H)$, because $N_{\tilde F_0}(x)=N_{\tilde F'}(x)$ for all $x \in V(\tilde F')$.

	\medskip
	\noindent
	\textbf{ Step \refstepcounter{steps}\thesteps\label{step3}.}
	It remains to embed the $\varepsilon n \pm 2$ vertices of $V \setminus W$.
	We achieve this by using the sets $B_{\tilde F_0,H}(u,v)$ and the switching technique explained earlier.
	By~\ref{prop:K3K2} we only have to embed edges, possibly incident to vertices from $\tilde F_0$, and if $\ell=3$ isolated $C_3$'s.
	We can obtain $F$ from $F' \cup F''$ by iteratively adding a $C_3$ or a connection via two vertices.
	Let $F_0 \subseteq F_1 \subseteq \dots \subseteq F_t$ be a sequence like this with $F_0 = F' \cup F''$, $F_t = F$, $\varepsilon n/3 -1 \le t \le \varepsilon n/2 +1$, and for some $t' \in [t+1]$ the graph $F_i \setminus F_{i-1}$  is a triangle if $t' \le i  \le t$ and a single edge otherwise.
	Note that for $\ell \ge 4$ we always have $t'=t+1$.
	We claim that we can extend the embedding inductively while keeping $|B_{\tilde F_{i}',G}(u,v)|\ge |B_{\tilde F_{i-1}',G}(u,v)| - 10$ for every $i\in [t]$, where each $\tilde F_i$ is the image of $F_i$ in $G$.

	For $1 \le i \le t'-1$ let $w_1,w_2 \in V(F_i) \setminus V(F_{i-1})$ be the two new vertices added in this step.
	We assume there are vertices $u_1,u_2$ in $F_{i-1}$, which have been embedded to $\tilde{u}_j := f_{i-1}(u_j)$ for $j=1,2$, such that $u_1,w_1,w_2,u_2$ is a path in $F_i$.
	Further let $\tilde{v}_1$ and $\tilde{v}_2$ be two vertices in $V(G) \setminus V(\tilde F_{i-1})$.
	Then for $j=1,2$ we have
	\begin{align*}
		|B_{\tilde F_{i-1},G}(\tilde{u}_j, \tilde{v}_j)|\ge |B_{\tilde F_0,G}(\tilde u_j, \tilde{v}_j)| - (i-1)10 \ge 10 \varepsilon n - 10 t' \ge  2 \varepsilon n ,
	\end{align*}
	and, therefore, there are disjoint sets $V_1, V_2 \subseteq B_{\tilde F_{i-1},G}(\tilde{u}_j, \tilde v_j)$ of size at least $\varepsilon n$.
	Then, by~\ref{def:edges}, there is an edge $\tilde{w}_1 \tilde{w}_2$ in $E(G)$ with $\tilde{w}_j \in V_j$ for $j=1,2$.
	Let $v_1,v_2$ be those vertices with $f_{i-1}(v_j) = \tilde{w}_j$ for $j=1,2$.
	From the embedding $f_{i-1}$ of $F_{i-1}$ we construct the embedding $f_i$ of $F_i$ by defining $f_i(w_j) := \tilde{w}_j$, $f_i(v_j) := \tilde{v}_j$ for $j=1,2$, and $f_i(x) := f_{i-1}(x)$ for all $x \in V(F_{i-1}) \setminus \{ v_1,v_2  \}$.
	
	Basically, for $j=1,2$ we swap $v_j$ out of the current embedding and use its previous image $\tilde{w}_j$ to embed $w_j$, and embed $v_j$ to $\tilde{v}_j$ instead.
	Observe, that $f_i$ is an embedding of $F_i$ because $f_{i-1}$ was an embedding of $F_{i-1}$, $\tilde{w}_j\tilde{u}_j$ is an edge of $G_\alpha$ for $j=1,2$, $w_1w_2$ is an edge of $G$, and the neighbours of $\tilde{v_j}$ in $\tilde F_{i-1}$ are also neighbours of $\tilde{v}_j$ in $G_\alpha$ by the definition of $B_{\tilde F_{i-1},G}(\tilde{u}_j, \tilde{v}_j)$ for $j=1,2$.
	Let $\tilde{F}_i := f_i(F_i)$.
	Note that $N_{\tilde F_i}(x) = N_{\tilde F_{i-1}}(x)$ for all but at most $10$ vertices $x$ in $V(\tilde F_{i-1})$, the vertices $\tilde v_1, \tilde v_2, \tilde u_1, \tilde u_2$ and the neighbours of $\tilde w_1, \tilde w_2$ in $\tilde F'_{i-1}$, because these are the vertices that are incident to the edges in $E(\tilde F_i')\setminus E(\tilde F_{i-1})$.
	Thus, we have $|B_{\tilde F_{i},H}(u,v)|\ge |B_{\tilde F_{i-1},H}(u,v)| - 10$, for any $u,v\in V(H)$.
	If for some $j \in \{1,2\}$ there is no vertex $u_j$ in $V(F_{i-1})$ with $u_jw_j \in E(F_i)$, we choose any vertex $u_j \in V(F_i)$, proceed as above, and then delete the edge $\tilde w_j \tilde u_j$ afterwards.
	
	In the case $\ell=3$ for $t' \le i \le t$ we need to embed the triangle on vertices $w_1,w_2,w_3 \in V(F_i) \setminus V(F_{i-1})$.
	Let $\tilde v_1,\tilde v_2, \tilde v_3$ be three vertices in $V(H) \setminus V(\tilde F_{i-1})$.
	Then for $j=1,2,3$ we have for any $u \in V(H)$
	\begin{align*}
		|B_{\tilde F_{i-1},H}(\tilde{v}_j)| \ge |B_{\tilde F_{0},H}(u, \tilde{v}_j)| - (i-1)10 \ge 10 \varepsilon n - 10 t \ge 3 \varepsilon n,
	\end{align*}
	and, therefore, there are disjoint sets $V_1, V_2, V_3 \subseteq B_{\tilde F_{i-1},H}(\tilde v_j)$ of size at least $\varepsilon n$.
	Then, by~\ref{def:cycles}, there is a triangle  $\tilde{w}_1 \tilde{w}_2 \tilde{w}_3$ in $G$ with $\tilde{w}_j \in V_j$ for $j=1,2,3$.
	Let $v_1,v_2,v_3$ be those vertices with $f_{i-1}(v_j) = \tilde{w}_j$ for $j=1,2,3$.
	From the embedding $f_{i-1}$ of $F_{i-1}$ we construct the embedding $f_i$ of $F_i$ by defining $f_i(w_j) := \tilde{w}_j$, $f_i(v_j) := \tilde{v}_j$ for $j=1,2,3$, and $f_i(x) := f_{i-1}(x)$ for all $x \in V(F_{i-1}) \setminus \{ v_1,v_2 ,v_3 \}$.
	
	As before, we swap $v_j$ out of the current embedding for $j=1,2,3$ and use its previous image $\tilde{w}_j$ to embed $w_j$, and embed $v_j$ to $\tilde{v}_j$ instead.
	Observe, that $f_i$ is an embedding of $F_i$ because $f_{i-1}$ was an embedding of $F_{i-1}$, $\tilde{w}_1 \tilde{w}_2 \tilde{w}_3$ is a triangle in $G$ and the neighbours of $\tilde{v_j}$ in $F_{i-1}$ are also neighbours of $\tilde{v}_j$ in $G_\alpha$ for $j=1,2,3$.
	Let $\tilde{F}_i := f_i(F_i)$.
	Note that $N_{\tilde F_i}(x) = N_{\tilde F_{i-1}}(x)$ for all but at most $9$ vertices $x$ in $V(\tilde F_{i-1})$, the vertices $\tilde v_1, \tilde v_2, \tilde v_3$ and the neighbours of $\tilde w_1, \tilde w_2, \tilde w_3$ in $\tilde F_{i-1}$, because they are the vertices that are incident to the edges in $E(\tilde F_i)\setminus E(\tilde F_{i-1})$.
	Thus, we have $|B_{\tilde F_{i},H}(u,v)|\ge |B_{\tilde F_{i-1},H}(u,v)| - 9$, for any $u,v\in V(H)$.
	
	Finally, $f:=f_t$ is a spanning embedding of $F$ into $H$ and we finished the proof.
	\qed
%\end{proof}

\section{Proof of auxiliary statements}
\label{sec:auxiliary}

It remains to prove Proposition~\ref{prop:expander} and Lemma~\ref{lem:2-partition} and~\ref{lem:embeddingF_1}.
We first show that $G(n,p)$ a.a.s.~satisifes Properties~\ref{def:edges}-\ref{def:cycles} required by $\cG(n,p,\ell_0,\ell)$.

\begin{proof}[Proof of Proposition~\ref{prop:expander}]
	Let $G$ be a graph drawn from the distribution $G(n,p)$.
	By a simple Chernoff bound (c.f.~\cite[Theorem~2.8]{janson2011random}), the probability that, for all $V_1,V_2\subset V(G)$, with $|V_1|,|V_2|\ge n/\ell_0$, we have
	\begin{align}\label{edgecount}
		(p/2) |V_1||V_2| \le t_{K_2}(V_1,V_2)\le 2 p |V_1||V_2|.
	\end{align}
	is at least $1-2^{2n}e^{-\omega(n)} = 1 - o(1)$, because $p_{\ell_0,\ell} = \omega(n^{-1})$.
	So we can assume that~\ref{def:edges} holds in $G$.
	
	Next, let $V_1,V_2,V_3 \subseteq V(G)$ be disjoint sets with $|V_1|,|V_2|,|V_3| \ge n/\ell_0$.
	Then by~\eqref{edgecount} we have $t_{K_2}(V_i, V_j)\ge (p/2)|V_i||V_j|$ for all $i \not= j$.
	Then, by convexity, the number of cherries with centre in $V_1$ and neighbours in $V_2$ and $V_3$ respectively is at least
	\begin{align*}
	\sum_{v\in V_1}\deg_{V_2}(v) \cdot \deg_{V_3}(v) \ge |V_1| \left( \sum_{v \in V_1} \deg_{V_2}(v) / |V_1| \right) \cdot \left( \sum_{v \in V_1} \deg_{V_2}(v) / |V_1| \right) \ge (p^2/4) |V_1| |V_2|V_3|.
	\end{align*}
	A simple second moment calculation implies that a.a.s.~the number of cherries in $G$ is at most $p^2 n^3$ and so~\ref{def:cherries} holds.
	Note that alternatively this also follows from Janson's inequality.
	
	Finally, if $\ell<\ell_0$ we have $p_{\ell_0,\ell} = \omega(n^{-(\ell-1)/\ell})$.
	Then for any $\ell \le k < \ell_0$, let $V_1,\dots,V_k \subseteq V(G)$ be disjoint sets such that $|V_i| \ge n/\ell_0^2$ for $1 \le i \le k$.
	By an application of Lemma~\ref{lem:cycle} a.a.s.~the number of cycles $C_k$ in $G$ with one vertex in each $V_i$  is at least $(p^k/2) \prod_{i=1}^k |V_i|$.
	And again by simple second moment calculations we get that a.a.s.~the number of $C_3$ and $C_4$ in $G$ is at most $p^3n^3$ and $p^4n^4$ respectively.
	As this implies that also~\ref{def:cycles} holds a.a.s., the lemma is proved.
\end{proof}

Next, for an almost $2$-regular graph we obtain a decomposition such that~\ref{prop:pathscycles}-\ref{prop:neighbours} hold.

\begin{proof}[Proof of Lemma~\ref{lem:2-partition}]
	Let $F \in \cF^\ell_*(n,2)$. 
	We start with $U = \emptyset$ and greedily add the vertex sets of cycles from $F$ to $U$.
	We stop once $|U|$ passes $10 \beta n$ and then remove exactly $2$ ($v_1$ and $v_2$) or at least $5$ ($v_1,\dots,v_k$ with $v_iv_{i+1} \in E(F)$) vertices from the last cycle sucht that $|U|=10 \beta n \pm 2$.
	
	Now let $W = V$.
	First we want to ensure Property~\ref{prop:neighbours}.
	By the previous step there are at most two vertices in $U$ with neighbours outside of $F[U]$.
	If we removed only $v_1,v_2$ from $U$ than we also remove these two from $W$, which gives us one $K_2$ in $F[V \setminus W]$.
	Otherwise we remove the pairs $v_1,v_2$ and $v_{k-1},v_k$ from $W$, which gives us two $K_2$'s in $F[V \setminus W]$, because $v_2$ and $v_{k-1}$ are not connected by an edge.
	
	To ensure Property~\ref{prop:pathscycles} for any cycle in $F[W \setminus U]$ of length larger than $C$ we remove two neighbouring vertices from $W$.
	There are at most $n/C$ cycles of length $C$ and as $10/C \le \varepsilon$ we have $|V \setminus W| \le 2 n/C + 4 \le \varepsilon n$.
	We continue by removing the vertex sets of $K_3$'s from $F[W \setminus U]$, keeping $|V \setminus W| \le \varepsilon n + 2$.
	If we still have $|V \setminus W| < \varepsilon n - 2$ then we remove additional $K_2$'s from $F[W \setminus U]$ until $|V \setminus W| = \varepsilon n \pm 2$.
	To ensure Property~\ref{prop:K3K2} we only choose those $K_2$, which are not connected to anything else from $V \setminus W$.
	This is possible because $\varepsilon \le 1/20$ and finishes our proof.	
\end{proof}

Finally, we arrive at the key ingredient of the proof.
We embed a small fraction of the graph such that the sets $B_{\tilde{F},H}(u,v)$ are large for all $u,v \in V(H)$.
We stress that it is crucial for universality that we can do this for all choices of $F$.

\begin{proof}[Proof of Lemma~\ref{lem:embeddingF_1}]
	
	For $\alpha>0$ and $\ell \ge 3$ let $\beta$, $\varepsilon$, and $\ell_0$ such that \eqref{eq:parameters} holds.
	Further let $n_0$ be large enough, $n \ge n_0$, $p=\omega(p_{\ell_0,\ell}(n))$, $G \in \cG(n,p,\ell_0,\ell)$, $G_\alpha$ an $n$-vertex graph on the same vertex set as $G$ with $\delta(G) \ge \alpha n$, $H:= G_\alpha \cup G(n,p)$, and $F \in \cF^\ell_*(10 \beta n \pm 2,2)$.
	
	We greedily choose vertices $x_1,x_2,\dots,x_t$ in $F$ of degree $2$ with distance $d_F(x_i,x_j) \ge 5$ for all $1 \le i<j \le t$ where $t = \beta n \pm 1$ is an integer.
	For $\ell=3$ we additionally require that either all of them are contained in a $C_3$ or none.
	We first want to embed these vertices together with their neighbours.
	If $\ell \ge 5$ we find a set of disjoint cherries in $H$ where we can embed them to.
	But if $\ell<5$ some of the vertices might be contained in a $C_3$ or $C_4$ and then we have to embed the whole cycle at once as we can not hope to close it later.
	If $\ell=4$ there are no $C_3$'s and, therefore, we can embed $x_1,\dots,x_t$ with their neighbours onto a set of disjoint $C_4$'s and close the cycle if necessary.
	Finally, if $\ell=3$ than either all $x_i$ are contained in a $C_3$ or none.
	In the former case we embed these onto a set of disjoint $C_3$'s and in the latter case we proceed as for $\ell=4$.
	We call a graph \emph{centred} if it has one vertex indicated as its centre and refer to the neighbours of the centre as neighbours of the graph.
	
	\begin{claim*}
		Let $T$ be a cherry, if $\ell \le4$ a $C_4$ or if $\ell=3$ a $C_3$.
		Then there is a choice of $t$ disjoint centred copies $T_1,\dots,T_t$ in $H$ such that the following holds.
		For any $u, v\in V$ there are at least $2 \eps n$ copies of $T$ in $T_1,\dots,T_t$ with their centres in $N_{G_\alpha}(u)$ and the neighbours in $N_{G_{\alpha}}(v)$.
	\end{claim*}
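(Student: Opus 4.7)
The plan is to construct the disjoint centred copies $T_1,\dots,T_t$ in $G$ via a random greedy process and to verify the $(u,v)$-property by stochastic domination of a binomial together with a union bound over the $n^2$ pairs $(u,v)$.

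First I would prove the following combinatorial estimate: for every $u,v\in V$ and every $S\subseteq V$ with $|S|\le\alpha n/5$, the number of centred copies of $T$ in $G$ vertex-disjoint from $S$ with centre in $N_{G_\alpha}(u)\setminus S$ and all neighbours in $N_{G_\alpha}(v)\setminus S$ is at least $c_\alpha p^{|E(T)|}n^{|V(T)|}$ for some constant $c_\alpha=c_\alpha(\alpha)>0$ of order $\alpha^3$. Since $|N_{G_\alpha}(u)\setminus S|,|N_{G_\alpha}(v)\setminus S|\ge\alpha n/2$, one can carve out pairwise disjoint sets of size $\ge\alpha n/10\ge n/\ell_0^2$ with the centre-side inside $N_{G_\alpha}(u)\setminus S$ and the neighbour-sides inside $N_{G_\alpha}(v)\setminus S$; when $T=C_4$ the ``opposite'' vertex has no constraint and its set can be taken from the complement. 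The degenerate case $N_{G_\alpha}(u)\subseteq N_{G_\alpha}(v)$ is handled by placing the centre-side in the intersection. Property~\ref{def:cherries} (for $T=K_{1,2}$) or~\ref{def:cycles} (for $T=C_3$ or $C_4$) then delivers the lower bound, and the corresponding upper bounds in those properties give a matching global upper bound $|\mathcal T_G|\le C_T p^{|E(T)|}n^{|V(T)|}$ for some absolute $C_T>0$ on the total number of centred copies of $T$ in $G$.

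Next I would run the random greedy process: for $i=1,\dots,t$ let $\mathcal A_i$ denote the set of centred copies of $T$ in $G$ vertex-disjoint from $\bigcup_{j<i}V(T_j)$, and pick $T_i$ uniformly at random from $\mathcal A_i$. Since $\bigl|\bigcup_{j<i}V(T_j)\bigr|\le v(T)\cdot t\le 4\beta n\le\alpha n/5$ (using $20\beta\le\alpha$), the estimate above shows $\mathcal A_i\neq\emptyset$ and, conditional on any realization of the history,
\begin{equation*}
\Pr\bigl[T_i\text{ has the }(u,v)\text{-property}\bigr]\ge\frac{c_\alpha p^{|E(T)|}n^{|V(T)|}}{C_T p^{|E(T)|}n^{|V(T)|}}=:c_0,
\end{equation*}
with $c_0\ge c\alpha^3$ for some absolute $c>0$. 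For fixed $u,v$, the count $Z_{u,v}=|\{i\le t: T_i\text{ is }(u,v)\text{-good}\}|$ therefore stochastically dominates $\operatorname{Bin}(t,c_0)$, and a Chernoff bound gives $\Pr[Z_{u,v}<c_0 t/2]\le e^{-c_0 t/8}=e^{-\Omega(n)}$. A union bound over the at most $n^2$ pairs $(u,v)$ shows that with positive probability the random realization satisfies $Z_{u,v}\ge c_0\beta n/2\ge 2\varepsilon n$ simultaneously for every $u,v$, where the last inequality follows from $\varepsilon\le 10^{-4}\alpha^3\beta/2$.

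The main obstacle is the first step, i.e.~establishing a lower bound on $(u,v)$-good copies that is uniform over all choices of a ``used'' set $S$ of near-linear size, since Properties~\ref{def:cherries} and~\ref{def:cycles} only provide tripartite or $k$-partite counts for \emph{pairwise disjoint} sets of linear size, whereas $N_{G_\alpha}(u)$ and $N_{G_\alpha}(v)$ may overlap heavily. A short case split on $|N_{G_\alpha}(u)\cap N_{G_\alpha}(v)|$ resolves this, after which the stochastic-domination step converts the combinatorial bound directly into concentration and sidesteps any martingale machinery, despite the dependence between the greedy steps.
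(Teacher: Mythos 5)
Your proposal is correct and follows essentially the same route as the paper: copies are chosen sequentially, uniformly at random among those disjoint from the previous ones, the conditional success probability is bounded below by comparing the counts from \ref{def:cherries}/\ref{def:cycles} inside disjoint linear-sized subsets of $N_{G_\alpha}(u)$ and $N_{G_\alpha}(v)$ against the global upper bounds on cherries and short cycles, and concentration plus a union bound over the $n^2$ pairs finishes the argument. Your stochastic domination of $\operatorname{Bin}(t,c_0)$ with a Chernoff bound is exactly the ``simple coupling argument'' the paper offers as an alternative to the sequential dependence lemma, so the two proofs differ only in presentation (and in the unoptimized constants, e.g.\ order $\alpha^4$ rather than $\alpha^3$ in the $C_4$ case).
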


	\begin{claimproof}[Proof of the Claim]
		We randomly and sequentially want to pick $t$ centred copies $T_1,\dots,T_t$ of $T$ from $G$, where each $T_i$ is picked uniformly at random from the copies of $T$ which are disjoint from $T_1,\ldots,T_{i-1}$.
		
		For $u,v\in V(H)$, $i\in [t]$, let $Y_{i}^{u,v}$ be the Bernoulli random variable for the event that $\tilde{x}_i\in N_{G_\alpha}(u)$ and $R_i \subseteq N_{G_{\alpha}}(v)$, where $\tilde{x}_i$ is the centre of $T_i$ and $R_i$ is the set containing the two neighbours of $\tilde{x}_i$ in $T_i$.
		Since $\delta(G_\alpha) \ge \alpha n$, $|F| = 20 \beta n \pm 2$ and the existing copies of $T$ cover at most $4 t \le 4 \beta n + 4 \le \alpha n/8$ vertices, there are at least $7\alpha n/8$ vertices available in both $N_{G_\alpha}(u)\setminus \bigcup_{j\in [i-1]} V(T_j)$ and $N_{G_\alpha}(v)\setminus \bigcup_{j\in [i-1]} V(T_j)$.
		Therefore, there are sets $V_1 \subseteq N_{G_\alpha}(u)\setminus \bigcup_{j\in [i-1]} V(T_j)$, $V_2,V_3 \subseteq N_{G_\alpha}(v)\setminus \bigcup_{j\in [i-1]} V(T_j)$, and $V_4 \subseteq V(H) \setminus \bigcup_{j\in [i-1]} V(T_j)$ of size $|V_i| = \alpha n/4$ for $i=1,2,3,4$.
		We now consider the three cases, where $T$ is a cherry, $C_3$, or $C_4$.
		
		\begin{itemize}[leftmargin=17pt]
			\item
				In the case $T = K_{1,2}$ we will find cherries with the centre in $V_1$ and the neighbours in $V_2$ and $V_3$.
				Since $G \in \cG(n, p, \ell_0, \ell)$ and $\alpha /4 \ge 1/\ell_0$ by~\ref{def:cherries} the number of cherries that we are interested in is at least $\alpha^3 p^2 n^3/256$.
				The total number of cherries in $G$ is at most $p^2 n^3$.
				This allows us to obtain
			\begin{align*}
				\mathbb{E} (Y_i^{u,v}\mid Y_1^{u,v},\dots, Y_{i-1}^{u,v}) \ge \frac{\alpha^3 p^2 n^3/256}{p^2 n^3}\ge \alpha^3 /256.
			\end{align*}
			\item
				When $T=C_4$ we embed the centre into $V_1$, both neighbours into $V_2$, $V_3$, and the last vertex into $V_4$.
				As $\ell \le4$, $G \in \cG(n, p, \ell_0, \ell)$, and $\alpha /4 \ge 1/\ell_0$ by~\ref{def:cycles} there are at least $\alpha^4 p^4 n^4/128$ suitable copies of $C_4$.
				On the other hand the total number of $C_4$ is at most $p^4 n^4$, which gives us
			\begin{align*}
				\mathbb{E} (Y_i^{u,v}\mid Y_1^{u,v},\dots, Y_{i-1}^{u,v}) \ge \frac{\alpha^4 p^4 n^4/128}{p^4 n^4}\ge \alpha^4 /512.
			\end{align*}
			\item 
				Finally, for $T=C_3$ we embed the centre into $V_1$ and the other two vertices into $V_2$ and $V_3$.
				As $\ell = 3$, we obtain from~\ref{def:cycles} that there are at least $\alpha^3 p^3 n^3/32$ suitable copies of $C_3$ and at most $p^3 n^3$ copies of $C_3$ in $G$ in total.
				This implies
			\begin{align*}
				\mathbb{E} (Y_i^{u,v}\mid Y_1^{u,v},\dots, Y_{i-1}^{u,v}) \ge \frac{\alpha^3 p^3 n^3/32}{p^3 n^3}\ge \alpha^3 /128.
			\end{align*}
		\end{itemize}

	Let $x:= t \alpha^4 /512 \ge \beta \alpha^4 n /1000 \ge 20 \varepsilon n$ by the choice of $\eps$ in \eqref{eq:parameters}.
	Thus, by~\cite[Lemma~2.2]{allen2016blow} (the sequential dependence lemma) with $\delta=1/2$, or a simple coupling argument, we get
	\begin{equation*}
		\mathbb{P}\big(Y_1^{u,v}+ \cdots + Y_{t}^{u,v} < 10\eps n \big) \le \mathbb{P}\big(Y_1^{u,v}+ \cdots + Y_{t}^{u,v} < x/2 \big) < e^{- x/12} \le e^{-\eps n}\,.
	\end{equation*}
	With a union bound, we conclude that there is a choice of $T_1,\dots,T_t$ such that, for each $u, v\in V(H)$, $Y_1^{u,v}+ \cdots + Y_{s}^{u,v} \ge 10\eps n$, i.e., the claim holds for any of the $T$.
	\end{claimproof}
	
	With $T_1,\dots,T_t$ as given by the claim we define the embedding $f$ of the graphs $T$ centred at $x_1,\dots,x_t$ by mapping $x_i$ to the centre $\tilde{x}_i$ of $T_i$ and the remaining vertices of this copy of $T$ accordingly.
	Note that for $\ell \le 4$ we might embed a cherry onto a $C_4$ and leave one vertex uncovered.
	This gives us a partial embedding $f$ of $F$.

	Next we extend this embedding $f$ by embedding additional components of $F$ and extending/connecting existing parts.
	For $\ell \le 4$ we first embed all $C_3$'s and $C_4$'s from $F$ which do not contain a vertex from $x_1,\dots,x_t$.
	Let $T$ be a $C_3$ or $C_4$ as a subgraph of $F_1$ which we have not yet embedded.
	As $|F| \le 10 \beta n +2 \le \alpha n/2$ and $G \in \cG(n,p,\ell_0,\ell)$ by \ref{def:cycles} there is a copy $\tilde{T}$ of $T$ in the vertices of $G$ not covered by $f$.
	We extend $f$ by embedding $T$ onto $\tilde{T}$.
	
	If there is some component of $G$ which has not been touched by our embedding so far, we choose an arbitrary vertex $v$ and extend $f$ by embedding it to an arbitrary available vertex $\tilde v$.
	To extend the existing parts let $u$ be any vertex of $F$, which is not yet embedded and has exactly one neighbour $v$ and at most one vertex at distance two that are already embedded (the second condition ensures that we do not close the gap to much).
	Then as $|N_{G_\alpha}(f(v))| \ge \alpha n$ and $|F| \le 10 \beta n + 2 \le \alpha n/2$ there are at least $\alpha n/2$ choices in $N_{G_\alpha}(f(v))$ for the image $\tilde{u}$ of $u$.
	We choose one arbitrarily and define $f(u) = \tilde{u}$.
	
	Finally, we want to finish the embedding of $F$ by connecting all existing paths to cycles.
	Oberserve that by our choice of $x_1,\dots,x_t$ and the previous step, all vertices that are not embedded lie on a path of length $3$ connecting two vertices which are already embeded by $f$.
	So let $f(u_1)=\tilde{u}_1$ and $f(u_2)=\tilde{u}_2$ be two vertices (which have degree one in the image of $f$) such that from the path $u_1,v_1,v_2,u_2$ in $F$ both vertices $u_1,u_2$ are not yet embedded.
	Let $V_1$ be the available vertices in $N_{G_\alpha}(\tilde{v}_1)$ and $V_2$ the available vertices in $N_{G_\alpha}(\tilde{v}_2)$ after removing the image of $f$.
	As before we have $|V_1|,|V_2| \ge \alpha n/2$ and using~\ref{def:edges} we find two vertices $\tilde{u}_1$ and $\tilde{u}_2$ such that $\tilde{v}_1\tilde{u}_1 \in E(G_\alpha)$, $\tilde{u}_1\tilde{u}_2 \in E(G)$, and $\tilde{u}_2\tilde{u}_2 \in E(G_\alpha)$.
	We extend $f$ by defining $f(u_1)=\tilde{u}_1$ and $f(u_2)=\tilde{u}_2$.
	We repeat the above until all vertices are embedded.
	For the final embedding $f$ of $F$ into $H$ let $\tilde{F}=f(F)$.
	
	By the claim for any $u, v\in V$, there are at least $10 \eps n$
	graphs from $T_1,\dots, T_t$ such that their centres are in
	$N_{G_\alpha}(u)$ and the neighbours are in $N_{G_{\alpha}}(v)$.
	Since these graphs are contained in $\tilde{F}$, we conclude that $|B_{\tilde{F},H}(u,v)|\ge 10\eps n$ for any $u, v\in V$, as required.
\end{proof}

%\bibliographystyle{amsplain_yk}
%\bibliography{literatur}

% \bib, bibdiv, biblist are defined by the amsrefs package.
\begin{bibdiv}
\begin{biblist}

\bib{AB_maxdegree2}{article}{
      author={Aigner, M.},
      author={Brandt, S.},
       title={Embedding arbitrary graphs of maximum degree two},
        date={1993},
     journal={{Journal of the London Mathematical Society}},
      volume={48},
       pages={39\ndash 51},
}

\bib{allen2016blow}{article}{
      author={Allen, Peter},
      author={B{\"o}ttcher, Julia},
      author={H{\`a}n, Hiep},
      author={Kohayakawa, Yoshiharu},
      author={Person, Yury},
       title={Blow-up lemmas for sparse graphs},
        date={2016},
     journal={arXiv:1612.00622},
       pages={122 pages},
}

\bib{BTW_tilings}{article}{
      author={Balogh, J\'{o}zsef},
      author={Treglown, Andrew},
      author={Wagner, Adam~Zsolt},
       title={Tilings in randomly perturbed dense graphs},
        date={2018},
     journal={Combinatorics, Probability and Computing},
       pages={1–18},
}

\bib{BHKM_powers}{article}{
      author={Bedenknecht, Wiebke},
      author={Han, Jie},
      author={Kohayakawa, Yoshiharu},
      author={Mota, Guilherme~Oliveria},
       title={Powers of tight {H}amilton cycles in random perturbed
  hypergraphs},
        date={2018},
     journal={arXiv:1802.08900},
       pages={13 pages},
}

\bib{BKS_SizeRamseyPath}{article}{
      author={Ben-Eliezer, Ido},
      author={Krivelevich, Michael},
      author={Sudakov, Benny},
       title={The size ramsey number of a directed path},
        date={2012},
        ISSN={0095-8956},
     journal={Journal of Combinatorial Theory, Series B},
      volume={102},
      number={3},
       pages={743 \ndash  755},
  url={http://www.sciencedirect.com/science/article/pii/S0095895611001006},
}

\bib{bohman2004adding}{article}{
      author={Bohman, Tom},
      author={Frieze, Alan~M.},
      author={Krivelevich, Michael},
      author={Martin, Ryan~R.},
       title={Adding random edges to dense graphs},
        date={2004},
     journal={Random Structures {\&} Algorithms},
      volume={24},
      number={2},
       pages={105\ndash 117},
         url={https://doi.org/10.1002/rsa.10112},
}

\bib{bohman2003many}{article}{
      author={Bohman, Tom},
      author={Frieze, Alan~M.},
      author={Martin, Ryan~R.},
       title={How many random edges make a dense graph {H}amiltonian?},
        date={2003},
     journal={Random Structures {\&} Algorithms},
      volume={22},
      number={1},
       pages={33\ndash 42},
         url={https://doi.org/10.1002/rsa.10070},
}

\bib{BolTho87}{article}{
      author={Bollob{\'{a}}s, B{\'{e}}la},
      author={Thomason, Andrew},
       title={Threshold functions},
        date={1987},
     journal={Combinatorica},
      volume={7},
      number={1},
       pages={35\ndash 38},
         url={https://doi.org/10.1007/BF02579198},
}

\bib{BHKMPPtrees}{article}{
      author={B{\"{o}}ttcher, Julia},
      author={Han, Jie},
      author={Kohayakawa, Yoshiharu},
      author={Montgomery, Richard},
      author={Parczyk, Olaf},
      author={Person, Yury},
       title={Universality for bounded degree spanning trees in randomly
  perturbed graphs},
        date={2018},
     journal={arXiv:1802.04707},
       pages={12 pages},
        note={Accepted for publication in Random Structures \& Algorithms},
}

\bib{BMPPMinDegree}{article}{
      author={B{\"{o}}ttcher, Julia},
      author={Montgomery, Richard},
      author={Parczyk, Olaf},
      author={Person, Yury},
       title={Embedding spanning bounded degree subgraphs in randomly perturbed
  graphs},
        date={2018},
     journal={arXiv:1802.04603},
       pages={25 pages},
}

\bib{DT_ramsey}{article}{
      author={Das, Shagnik},
      author={Treglown, Andrew},
       title={Ramsey properties of randomly perturbed graphs: cliques and
  cycles},
        date={2019},
     journal={arXiv:1901.01684},
       pages={23 pages},
}

\bib{DKRR15}{article}{
      author={{Dellamonica Jr.}, Domingos},
      author={Kohayakawa, Yoshiharu},
      author={R{\"{o}}dl, Vojtech},
      author={Ruci{\'n}ski, Andrzej},
       title={An improved upper bound on the density of universal random
  graphs},
        date={2015},
     journal={Random Structures {\&} Algorithms},
      volume={46},
      number={2},
       pages={274\ndash 299},
         url={https://doi.org/10.1002/rsa.20545},
}

\bib{dirac1952some}{article}{
      author={Dirac, Gabriel~Andrew},
       title={Some theorems on abstract graphs},
        date={1952},
     journal={Proceedings of the London Mathematical Society},
      volume={3},
      number={1},
       pages={69\ndash 81},
}

\bib{ferber2016optimal}{article}{
      author={Ferber, Asaf},
      author={Kronenberg, Gal},
      author={Luh, Kyle},
       title={Optimal threshold for a random graph to be 2 universal},
        date={2016},
     journal={arXiv:1612.06026},
       pages={23 pages},
}

\bib{ferber2016embedding}{article}{
      author={Ferber, Asaf},
      author={Luh, Kyle},
      author={Nguyen, Oanh},
       title={Embedding large graphs into a random graph},
        date={2017},
        ISSN={1469-2120},
     journal={Bulletin of the London Mathematical Society},
      volume={49},
      number={5},
       pages={784\ndash 797},
         url={http://dx.doi.org/10.1112/blms.12066},
}

\bib{FerbeNenadovSpanning}{article}{
      author={Ferber, Asaf},
      author={Nenadov, Rajko},
       title={Spanning universality in random graphs},
     journal={Random Structures \& Algorithms},
      volume={53},
      number={4},
       pages={604\ndash 637},
}

\bib{KF_Random}{book}{
      author={Frieze, Alan},
      author={Karo{\'n}ski, Micha{\l}},
       title={Introduction to random graphs},
   publisher={Cambridge University Press},
        date={2016},
}

\bib{HZ_perturbedcycles}{article}{
      author={Han, Jie},
      author={Zhao, Yi},
       title={Hamiltonicity in randomly perturbed hypergraphs},
        date={2018},
     journal={arXiv:1802.04586},
       pages={16 pages},
}

\bib{janson2011random}{book}{
      author={Janson, Svante},
      author={Łuczak, Tomasz},
      author={Ruci{\'n}ski, Andrzej},
       title={Random graphs},
   publisher={John Wiley {\&} Sons},
        date={2000},
}

\bib{JohanssonKahnVu_FactorsInRandomGraphs}{article}{
      author={Johansson, Anders},
      author={Kahn, Jeff},
      author={Vu, Van~H.},
       title={Factors in random graphs},
        date={2008},
     journal={Random Structures {\&} Algorithms},
      volume={33},
      number={1},
       pages={1\ndash 28},
         url={https://doi.org/10.1002/rsa.20224},
}

\bib{JK_Trees}{article}{
      author={Joos, Felix},
      author={Kim, Jawhoon},
       title={Spanning trees in randomly perturbed graphs},
        date={2018},
     journal={arXiv:1803.04958},
       pages={41 pages},
}

\bib{KSS_AlonYuster}{article}{
      author={Koml{\'o}s, J{\'a}nos},
      author={S{\'a}rk{\"o}zy, G{\'a}bor~N.},
      author={Szemer{\'e}di, Endre},
       title={Proof of the {A}lon-{Y}uster conjecture},
        date={2001},
     journal={Discrete Mathematics},
      volume={235},
      number={1-3},
       pages={255\ndash 269},
         url={https://doi.org/10.1016/S0012-365X(00)00279-X},
}

\bib{Kor76}{article}{
      author={Kor{\v{s}}unov, A.~D.},
       title={Solution of a problem of {P}. {E}rd{\H o}s and {A}. {R}\'enyi on
  {H}amiltonian cycles in undirected graphs},
        date={1976},
     journal={Doklady Akademii Nauk SSSR},
      volume={228},
      number={3},
       pages={529\ndash 532},
}

\bib{krivelevich2015cycles}{article}{
      author={Krivelevich, Michael},
      author={Kwan, Matthew},
      author={Sudakov, Benny},
       title={Cycles and matchings in randomly perturbed digraphs and
  hypergraphs},
        date={2016},
     journal={Combinatorics, Probability {\&} Computing},
      volume={25},
      number={6},
       pages={909\ndash 927},
         url={https://doi.org/10.1017/S0963548316000079},
}

\bib{krivelevich2015bounded}{article}{
      author={Krivelevich, Michael},
      author={Kwan, Matthew},
      author={Sudakov, Benny},
       title={Bounded-degree spanning trees in randomly perturbed graphs},
        date={2017},
     journal={{SIAM} Journal on Discrete Mathematics},
      volume={31},
      number={1},
       pages={155\ndash 171},
         url={https://doi.org/10.1137/15M1032910},
}

\bib{MM_HyperPeturbed}{article}{
      author={McDowell, Andrew},
      author={Mycroft, Richard},
       title={{H}amilton {$\ell$}-cycles in randomly-perturbed hypergraphs},
        date={2018},
     journal={The Electronic Journal of Combinatorics},
      volume={25},
      number={4},
       pages={P4.36},
}

\bib{M14b}{article}{
      author={Montgomery, R.},
       title={Embedding bounded degree spanning trees in random graphs},
        date={2014},
     journal={arXiv:1405.6559},
       pages={14 pages},
}

\bib{M14a}{article}{
      author={Montgomery, R.},
       title={Spanning trees in random graphs},
        date={2018},
     journal={arXiv:1810.03299},
}

\bib{NT_sprinkling}{article}{
      author={Nenadov, Rajko},
      author={Trujić, Miloš},
       title={Sprinkling a few random edges doubles the power},
        date={2018},
     journal={arXiv:1811.09209},
       pages={18 pages},
}

\bib{Pos76}{article}{
      author={P{\'{o}}sa, L.},
       title={{H}amiltonian circuits in random graphs},
        date={1976},
     journal={Discrete Mathematics},
      volume={14},
      number={4},
       pages={359\ndash 364},
         url={https://doi.org/10.1016/0012-365X(76)90068-6},
}

\bib{RSR08_Dirac}{article}{
      author={R{\"{o}}dl, Vojtech},
      author={Szemer{\'{e}}di, Endre},
      author={Rucinski, Andrzej},
       title={An approximate dirac-type theorem for \emph{k} -uniform
  hypergraphs},
        date={2008},
     journal={Combinatorica},
      volume={28},
      number={2},
       pages={229\ndash 260},
         url={https://doi.org/10.1007/s00493-008-2295-z},
}

\end{biblist}
\end{bibdiv}

\end{document}